\def\smskip{\smallskip}
\def\texitem#1{\par\smskip\noindent\hangindent 25pt
               \hbox to 25pt {\hss #1 ~}\ignorespaces}
\newcommand{\BEAS}{\begin{eqnarray*}}
\newcommand{\EEAS}{\end{eqnarray*}}
\newcommand{\BEA}{\begin{eqnarray}}
\newcommand{\EEA}{\end{eqnarray}}
\newcommand{\BEQ}{\begin{eqnarray}}
\newcommand{\EEQ}{\end{eqnarray}}
\newcommand{\BIT}{\begin{itemize}}
\newcommand{\EIT}{\end{itemize}}
\newcommand{\BNUM}{\begin{enumerate}}
\newcommand{\ENUM}{\end{enumerate}}
\newcommand{\BA}{\begin{array}}
\newcommand{\EA}{\end{array}}
\newcommand{\reals}{\mathbb{R}}
\DeclareMathOperator*{\argmin}{\arg\!\min}
\newif\ifpagenumbering
\newsavebox{\theorembox}
\newsavebox{\lemmabox}
\newsavebox{\defnbox}
\newsavebox{\corollarybox}
\newsavebox{\remarkbox}
\newsavebox{\assbox}
\savebox{\theorembox}{\noindent\bf Theorem}
\savebox{\lemmabox}{\noindent\bf Lemma}
\savebox{\defnbox}{\noindent\bf Definition}
\savebox{\corollarybox}{\noindent\bf Corollary}
\savebox{\remarkbox}{\noindent\bf Remark}
\newtheorem{assumption}{Assumption}
\newtheorem{definition}{Definition}
\newtheorem{theorem}{Theorem}
\newtheorem{lemma}[theorem]{Lemma}
\newtheorem{corollary}[theorem]{Corollary}
\theoremstyle{remark}
\newtheorem{remark}{Remark}
\numberwithin{assumption}{section}
\numberwithin{definition}{section}
\numberwithin{theorem}{section}
\numberwithin{remark}{section}
\title{\bfseries\sffamily
Perturbed Gradient Descent via Convex Quadratic Approximation for Nonconvex Bilevel Optimization}
\author{
    \normalsize Nazanin Abolfazli$^*$, Sina Sharifi$^\dag$, Mahyar Fazlyab$^\dag$, Erfan Yazdandoost Hamedani$^*$
}
\begin{document}
\maketitle

\begin{abstract}
Bilevel optimization is a fundamental tool in hierarchical decision-making and has been widely applied to machine learning tasks such as hyperparameter tuning, meta-learning, and continual learning. While significant progress has been made in bilevel optimization, existing methods predominantly focus on the {nonconvex-strongly convex, or the} nonconvex-PL settings, leaving the more general nonconvex-nonconvex framework underexplored. In this paper, we address this gap by developing an efficient gradient-based method inspired by the recently proposed Relaxed Gradient Flow (RXGF) framework with a continuous-time dynamic \cite{sharifi2025safe}. In particular, we introduce a discretized variant of RXGF and formulate convex quadratic program subproblems with closed-form solutions. We provide a rigorous convergence analysis, demonstrating that under the existence of a KKT point and a regularity assumption {(lower-level gradient PL assumption)}, our method achieves an iteration complexity of \( \mathcal{O}(1/\epsilon^{1.5}) \) in terms of the squared norm of the KKT residual for the reformulated problem. Moreover, even in the absence of the regularity assumption, we establish an iteration complexity of \( \mathcal{O}(1/\epsilon^{3}) \) for the same metric.
Through extensive numerical experiments on convex and nonconvex synthetic benchmarks and a hyper-data cleaning task, we illustrate the efficiency and scalability of our approach.  
\end{abstract}

\makeatletter
\renewcommand{\thefootnote}{\fnsymbol{footnote}}
\renewcommand{\theHfootnote}{\fnsymbol{footnote}}
\makeatother

\footnotetext[1]{%
  N.\ Abolfazli and E.\ Yazdandoost~Hamedani are with the
  Dept.\ of Systems \& Industrial Engineering, University of Arizona,
  Tucson, AZ 85721, USA.\\
  \ttfamily\small \{nazaninabolfazli, erfany\}@arizona.edu%
}

\footnotetext[2]{%
  S.\ Sharifi and M.\ Fazlyab are with the Dept.\ of Electrical \& Computer
  Engineering, Johns Hopkins University, Baltimore, MD 21218, USA.\\
  \ttfamily\small \{sshari12, mahyarfazlyab\}@jhu.edu%
}

\section{Introduction}
Bilevel optimization is a fundamental framework in hierarchical decision-making, in which one optimization problem is nested within another. This class of problem finds a plethora of applications in engineering \cite{pandvzic2018investments}, economics \cite{von1952theory}, transportation \cite{sharma2015iterative}, and machine learning \cite{finn2017model, rajeswaran2019meta,fei2006one,hong2020two,bengio2000gradient,hao2024bilevel,zhang2022revisiting}. 
A broad category of bilevel optimization problems can be written as optimization problems of the following form, also known as optimistic bilevel optimization. 
\begin{align}\label{eq: bilevel task 1}\tag{BLO}
    &\min_{x \in \mathbb R^n,y\in\mathcal Y^\star(x)}  f(x,y) \\
    &\text{s.t.} \quad  \mathcal Y^\star(x) \! = \! \argmin_{y \in \mathbb R^m} ~g(x,y) \nonumber,
\end{align}
where $f,g: \mathbb R^n\times\mathbb R^m \to \mathbb R$ represent the upper-level and lower-level objective functions. 
The implicit objective approach \cite{dempe2002foundations} reformulated \eqref{eq: bilevel task 1} by minimizing the problem defined below 
\begin{equation}
    \min_{x \in \reals^n} \ \ell(x) \quad \text{where} \quad \ \ell(x) \! := \min_{y \in \mathcal Y^\star(x)} f(x,y), \notag
    \end{equation}
where $\ell:\mathbb R^n\to \mathbb R$ denotes the implicit objective function.
Bilevel optimization problems are inherently non-convex and computationally challenging, largely due to the complex interdependence between the upper-level and lower-level problems.
%

The majority of recent works in bilevel optimization concentrate on scenarios where the lower-level problem is strongly convex. 
Under the strong convexity of $g(x,\cdot)$, the solution of the lower-level problem $\mathcal Y^\star(x)$ is a singleton, and bilevel optimization reduces to minimizing $\ell(x)$ whose gradient can be calculated with implicit gradient method \cite{pedregosa2016hyperparameter, gould2016differentiating, ghadimi2018approximation, lorraine2020optimizing, ji2021bilevel,li2022fully,abolfazli2023inexact}.
Recently, to relax the strong convexity assumption in the lower-level problem, some works focused on bilevel optimization with merely convex lower-level objectives. However, this introduces significant challenges, particularly the presence of multiple lower-level local optima (i.e., a non-singleton solution set) may hinder the adoption of implicit-based approaches that rely on implicit function theorem \cite{sow2022constrained,liu2023averaged,lu2024first}. Bilevel optimization problems with nonconvex lower-level objectives are common in various machine learning applications, such as hyperparameter optimization in deep neural network training \cite{vicol2022implicit}, continual learning \cite{borsos2020coresets, hao2024bilevel}, and more. However, the above bilevel optimization methods primarily rely on the assumption of lower-level strong convexity or convexity, which significantly limits their effectiveness in handling non-convex lower-level problems. Recently, some works studied bilevel optimization with non-convex lower-level problems  \cite{liu2022bome,chen2023bilevel,huang2023momentum,huang2024optimal}. 
%
 {Detailed review of recent works closely related to ours is provided in \cref{sec:related_works}}.
Next, we provide an interesting machine learning application that can be framed as bilevel optimization problems.

\textbf{Adversarial Training:} Adversarial Training (AT) is a defense strategy that strengthens machine learning models against adversarial attacks by exposing them to adversarial examples during training. It originally was formulated as a min-max optimization problem where the defender (model trainer) minimizes the worst-case training loss, while the attacker generates adversarial perturbations to maximize this loss \cite{goodfellow2014explaining,madry2018towards}. Recently, \cite{zhang2022revisiting} formulated AT as a bilevel optimization problem, where the upper-level minimizes the training loss, and the lower-level generates adversarial perturbations.
\begin{align*}
    &\min_{\theta}~ \mathbb{E}_{\xi \sim \mathcal{D}} \left[ \ell_{\text{tr}} (\theta, x + \delta^*(\theta), \xi) \right] \\
    &\text{s.t.} \quad \delta^*(\theta) \in  \argmin_{\delta} \ell_{\text{atk}} (\theta, \delta; \xi),
\end{align*}
where the function $\ell_{\text{tr}}$ represents the training loss, and $\ell_{\text{atk}}$ as the objective for generating adversarial perturbations. Note that when the attack objective is chosen as $\ell_{\text{atk}} = -\ell_{\text{tr}}$, the bilevel optimization formulation becomes equivalent to the standard min-max adversarial training problem.

\subsection{Existing Approaches for Solving Bilevel Optimization Problems}
In this subsection, we provide a brief overview of traditional approaches for solving bilevel optimization problems. \\

\textbf{Hyper-gradient Descent:} Assume that the minimum of $g(x,\cdot)$ is unique for all $x$, resulting in the optimal solution map \( \mathcal Y^\star(x) \) of the lower-level problem being single-valued and
continuously differentiable. The most straightforward approach to solving \eqref{eq: bilevel task 1} is to perform a gradient descent on the implicit objective function $\ell(x)$. 
\[
\nabla \ell(x) = \nabla_x f(x, y^\star(x)) + D_x y^\star(x) \nabla_y f(x, y^\star(x))
\]
If the Hessian matrix \( \nabla_{yy}^2 g(x, y^\star(x)) \) is invertible, the implicit function theorem \cite{Rudin1976} guarantees that the map \( x \mapsto y^\star(x) \) is continuously differentiable. Moreover, the Jacobian of the solution map, \( D_x y^\star(x) \), can be derived by differentiating the implicit equation with respect to \( x \). 
\[
 {\nabla_{yx}^2 g(x, y^\star(x)) + D_x y^\star(x)\nabla_{yy}^2 g(x, y^\star(x)) = 0}
\]
This approach is sometimes known as the hyper-gradient descent. However, hyper-gradient descent is computationally
expensive due to the need to compute $D_x y^*(x)$ at each step. To alleviate the computational burden, various approximation methods have been developed to avoid direct inversion of the Hessian \cite{pedregosa2016hyperparameter,rajeswaran2019meta,grazzi2020iteration,ghadimi2018approximation,lorraine2020optimizing}. 
In addition, the exact computation of $y^\star(x)$ can be mitigated by considering a common surrogate map through replacing the optimal lower-level solution with an approximated solution $y$,
\begin{subequations}\label{eq:grad-surrogate}
\begin{align}
    &F(x,y) :=  \nabla_x f(x,y) - \nabla_{yx}^2 g(x,y)^\top v(x,y),\\
    &\nabla_{yy}^2 g(x,y) v(x,y) = \nabla_y f(x,y).
\end{align}
\end{subequations}
Under suitable assumptions on the upper-level objective function $f$, this estimate can be controlled by the distance between the optimal solution $y^\star(x)$ and the approximated solution $y$. In \eqref{eq:grad-surrogate} the effect of Hessian inversion is presented in a separate term $v(x,y)$ which solves a parametric quadratic problem and can be approximated using one or multiple steps of gradient descent \cite{abolfazli2023inexact,li2022fully,arbel2021amortized}.

\textbf{Value Function Approach:} Another approach is based on the observation that \eqref{eq: bilevel task 1} is equivalent to the following constrained optimization:
\begin{alignat}{2}\label{eq: bilevel task 1 c}
    & \min_{x,y} \ f(x,y) \quad 
    &\text{s.t.} \quad    g(x,y) - g^*(x) \leq 0\
\end{alignat}
where $g^*(x) = \min_{y} g(x,y)$.  Compared to hyper-gradient approaches, this method does not require computing the implicit derivative $D_x y^\star(x)$. 
As shown in \cite{ye1995optimality} one cannot establish a KKT condition of \eqref{eq: bilevel task 1} through this reformulation since none of the standard constraint qualifications (Slater's, LICQ, MFCQ, CRCQ) hold for this reformulation. Moreover, \cite{xiao2023generalized} showed that the calmness condition which is the weakest constraint qualification does not hold for bilevel optimization with non-convex lower-level when employing the value function-based reformulation.

\textbf{Stationary-Seeking Methods:} An alternative method is to replace the lower-level problem in \eqref{eq: bilevel task 1} with the stationarity condition. As a result, the bilevel optimization problem can be reformulated as the following constrained, non-convex single-level optimization problem:
\begin{alignat}{2}\label{eq: bilevel task 1 b}
    & \min_{x,y} \ f(x,y) \quad 
    &\text{s.t.} \quad   \nabla_y g(x,y)=0\
\end{alignat}
The reformulated problem \eqref{eq: bilevel task 1 b} coincides with the original bilevel optimization problem \eqref{eq: bilevel task 1} under the assumption that \( g(x, \cdot) \) is convex and/or satisfies   {weak PL condition} \cite{csiba2017global} for any $x$. In scenarios where \( g(x, \cdot) \) lacks these conditions, the reformulation in \eqref{eq: bilevel task 1 b} corresponds to finding a minimizer of the upper-level function over stationary solutions of the lower-level problem.

\subsection{Related Works}\label{sec:related_works}

\begin{table*}[t!]\scriptsize
     \renewcommand{\arraystretch}{1.0}
    \centering
    \begin{tabular}{l l l l l l}
        \toprule
        \textbf{Algorithm} & $g(x,\cdot)$ & \textbf{Additional Assumptions} &   Oracle & Loop(s) & \textbf{Complexity} \\
        \midrule
        BOME \cite{liu2022bome} & PL  & Bounded $|f|$ and $|g|$, lower-level unique solution  & 1st & Double & $\mathcal{O}(\epsilon^{-3})$ \\
        V-PBGD \cite{shen2023penalty} & PL  & See \cref{rem:discussion}  & 1st  & Double & $\tilde{\mathcal{O}}(\epsilon^{-3/2})$\\
        $F^2$BA \cite{chen2024finding} & PL & $\mu$-PL penalty function, $f$ has Lipschitz Hessians in y   & 1st  & Double & $\tilde{\mathcal{O}}(\epsilon^{-1})$\\
        GALET \cite{xiao2023generalized} & {PL + SC}  & {$\inf_{x,y}\{\sigma^{+}_{\min}(\nabla_{yy}^2 g(x, y))\}  > 0 \quad \forall{x,y}$}  & 2nd  & Triple & $\tilde{\mathcal{O}}(\epsilon^{-1})$\\
        HJFBiO \cite{huang2024optimal} & PL  & {$\sigma(\nabla_{yy}^2 g(x, y^\star(x)) ) \in [\mu,L_g]$}  & {2nd } & Single & $\mathcal{O}(\epsilon^{-1})$\\
        \textbf{Ours-\cref{thm:conv_rate}} & non-convex & Regularity Condition, see \cref{rem:regularity_condition}  & 2nd  & Single  & $\mathcal{O}(\epsilon^{-3/2})$\\
        \textbf{Ours-\cref{thm:conv_rate2}} & non-convex & \qquad \qquad \qquad \qquad \qquad -  & 2nd  & Single & $\mathcal{O}(\epsilon^{-3})$\\
        \bottomrule
    \end{tabular}
    \caption{Comparison of bilevel algorithms with non-convex lower-level specifically without the presence of lower-level constraints. \textbf{PL} and \textbf{SC} stand for the Polyak-Łojasiewicz and strong convexity, respectively. $\sigma(A)$ represents the singular values of matrix A. Complexity is based on finding an $\epsilon$-stationary solution such that $\|\nabla \ell(x)\|^2 \leq \epsilon$ or its equivalent variants. The notation $\tilde{\mathcal{O}}$ omits the dependency on $\log(\varepsilon^{-1})$.  {For more discussion on related works see \cref{app:Related_works}.}}
    \label{tab:compare}
\end{table*}

 {In this section, we present a comprehensive review of recent works that are closely related to bilevel optimization problems with a non-convex lower level.}

While most works assume a unique lower-level solution, newer studies address cases where this assumption does not hold \cite{sow2022constrained,chen2023bilevel,shen2024method,liu2022bome,xiao2023generalized}.
Some works focus on bilevel optimization with a convex lower-level problem, which introduces the challenge of multiple lower-level solutions \cite{liu2020generic,sow2022constrained,liu2023averaged, shen2024method, shen2023penalty, chen2023bilevel,lu2024first}. 
However,  \cite{chen2023bilevel} has shown that additional assumptions on the lower-level problem are necessary to ensure meaningful guarantees. 

{Beyond lower-level convexity, recently, several research studied bilevel optimization with a non-convex lower-level problem satisfying PL condition. 
More specifically,  \cite{liu2022bome} proposed a first-order method and established the first non-asymptotic convergence guarantee of $\mathcal{O}(\epsilon^{-3})$ for bilevel optimization satisfying the PL condition. They further assume that the lower-level solution is unique, and both the upper and lower-level objective functions are bounded. 
Later, \cite{shen2023penalty} introduced a penalty-based algorithm in which the lower-level objective $g$ satisfies the PL condition, where the method relies solely on first-order oracles with an iteration complexity of $\mathcal{O}(\epsilon^{-3/2})$. 
 \cite{kwon2023penalty} studied the non-convex
bilevel optimization with non-convex lower-level satisfying
proximal error-bound (EB) condition that is analogous
to PL condition when the lower-level is unconstrained. Their approach guarantees convergence to an \( \epsilon \)-stationary point of the penalty function, requiring \( \tilde{\mathcal{O}}(\epsilon^{-3/2}) \) first-order gradient oracle calls. Further \cite{chen2024finding} showed that the proximal
operator in \cite{kwon2023penalty} is unnecessary and their method can converge under PL condition with an improved complexity of \( \tilde{\mathcal{O}}(\epsilon^{-1}) \). However,  They assume that the penalty function $h_\theta= \theta f + g$ satisfies the $\mu$--PL condition which considerably stronger and more restrictive than simply assuming that the lower-level function $g$ is $\mu$--PL. This is because it is not even straightforward to ensure that the sum of two PL functions remains PL. Moreover, assuming that the Hessian of the upper-level objective function is Lipschitz continuous is a particularly restrictive condition that further narrows the class of problems to which the analysis applies.  \cite{xiao2023generalized} proposed a generalized alternating method and obtained an $\epsilon$-stationary point within $\tilde{\mathcal{O}}(\epsilon^{-1})$ under PL condition of the lower-level problem. 
 \cite{huang2023momentum} introduced a class of momentum-based gradient method for non-convex bilevel optimization problems, where both upper-level
and lower-level problems are non-convex, and the lower-level problem satisfies PL condition. Furthermore, they assume that $\nabla^2_{yy} g(x,y^*(x))$ is non-singular at the minimizer of $g$. Their method achieves a complexity of $\tilde{\mathcal{O}}(\epsilon^{-1})$ in finding an $\epsilon$-stationary solution. However, their proposed method requires computing expensive projected Hessian and Jacobian matrices along with their inverses. Moreover, computing the SVD decomposition of the Hessian matrix at each step impose a $\mathcal{O}(d^3)$ complexity where $d = \max\{m,n\}$. More recently,  \cite{huang2024optimal} claimed that the projection operator can remove expensive SVD decomposition in \cite{huang2023momentum} and proposed a Hessian/Jacobian-free
bilevel method achieving a complexity of $\mathcal{O}(\epsilon^{-1})$ in finding $\epsilon$-stationary solution under the same setting.  {A concise
comparison between our proposed method and  related works is summarized
in \cref{tab:compare}}.
Some techniques additionally handle equality and inequality constraints at the lower level \cite{xiao2023alternating, khanduri2023linearly, xu2023efficient, kornowski2024first}.}

\subsection{Contribution} 
In this paper, we study a general bilevel optimization problem with smooth objective functions. While existing methods primarily focus on the nonconvex-PL setting, we consider a more general nonconvex-nonconvex bilevel framework. Drawing inspiration from the Relaxed Gradient Flow (RXGF) \cite{sharifi2025safe}, we introduce a novel discrete-time gradient-based method for efficiently solving bilevel optimization problems.   
RXGF is a continuous-time dynamical system that ensures convergence to a neighborhood of the optimal solution, assuming the lower-level problem is strongly convex. However, its reliance on solving ODEs makes it computationally impractical for large-scale problems. To address this limitation, we propose an efficient discretized variant of RXGF that is computationally feasible for high-dimensional settings. We establish that, under the existence of a KKT point and a regularity assumption, our method achieves a convergence rate of \( \mathcal{O}(1/\epsilon^{1.5}) \) in terms of the squared norm of the KKT residual. Moreover, even without the regularity assumption, we demonstrate that our method attains a convergence rate of \( \mathcal{O}(1/\epsilon^{3}) \) for the same metric.
\Cref{fig:Overview} presents an overview of the methods.

%

\begin{figure}[t!]
    \centering
    \includegraphics[width=0.65\linewidth]{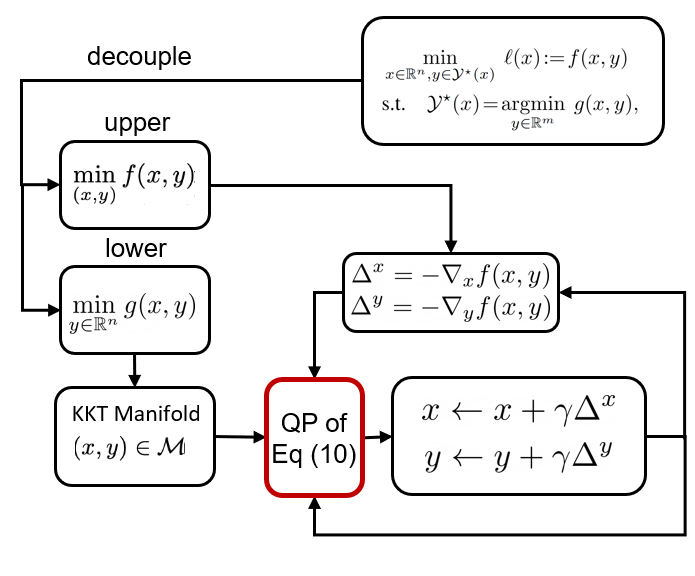}
    \caption{
        Overview of the proposed method. 
        The QP takes the gradient directions, perturbs them according to the lower-level problem, and then the variables are updated using the new directions. 
    }
    \label{fig:Overview}
\end{figure}

\section{Preliminaries} 

\subsection{Assumptions and Definitions}
This subsection outlines the definitions and assumptions required throughout the paper. We begin by discussing the assumptions on the upper-level and lower-level objective functions,
respectively.

\begin{assumption}[Upper-level Objective]\label{assumption:upperlevel}
$f:\reals^n\times\reals^m\to\reals$ is a continuously differentiable function such that $\bar{f}\triangleq \inf_{(x,y)} f(x,y)>-\infty$.
$\nabla_x f(\cdot,\cdot)$ and $\nabla_y f(\cdot,\cdot)$ are $L_{x}^f$ and $L^f_{y}$-Lipschitz continuous, respectively. Moreover, there exists $C_f>0$ such that $\|\nabla f(x,y)\|\leq C_f$ for any $(x,y)$. 

\end{assumption}
\begin{assumption}[Lower-level Objective]\label{assumption:lowerlevel}
 For any $x$, function $g(x,\cdot)$ is twice continuously differentiable. $\nabla_y g(x,\cdot)$, and $\nabla_y g(\cdot,y)$, are $L_{yy}^g$- and $L^g_{yx}$-Lipschitz continuous for all $(x,y)$, respectively. There exists $C_g>0$ such that $\|\nabla_y g(x,y)\|\leq C_g$ for all $(x,y)$.
\end{assumption}
\begin{assumption}\label{assum:KKT}
    There exists $(\bar{x},\bar{y})$ and $\bar{\lambda}\in\mathbb{R}^m$ such that $\nabla_y g(\bar{x},\bar{y})=0$, $\nabla_x f(\bar{x},\bar{y})+\nabla_{yx}^2 g(\bar{x},\bar{y})^\top\bar{\lambda}=0$, and $\nabla_y f(\bar{x},\bar{y})+\nabla_{yy}^2 g(\bar{x},\bar{y})^\top\bar{\lambda}=0$.
\end{assumption}

Consider the following single-level reduction of the bilevel problem in \eqref{eq: bilevel task 1}
\begin{alignat}{2}\label{eq: bilevel task 2 a}
    & \min_{(x,y)} \ f(x,y) \quad 
    &\text{s.t.} \quad (x,y) \in \mathcal{M}
\end{alignat}
where $\mathcal{M} = \{(x,y) \mid  h(x,y):=\|\nabla_y g(x,y)\|_2^2=0\}$. 
This formulation has only one constraint compared to \eqref{eq: bilevel task 1 b}. Our goal is to find an $\epsilon$-approximate KKT point of the above problem which we define as follows.
\begin{definition}\label{def:epsilon-KKT}
$(x_\epsilon,y_\epsilon,\lambda_\epsilon)\in\mathbb{R}^n\times \mathbb{R}^m\times \mathbb{R}$ is called an $\epsilon$-KKT point if $h(x_\epsilon,y_\epsilon)\leq \epsilon$ and $\|\nabla f(x_\epsilon,y_\epsilon)+\lambda_\epsilon \nabla h(x_\epsilon,y_\epsilon)\|^2\leq \epsilon$. 
\end{definition}
Note that if $(x_\epsilon,y_\epsilon,\lambda_\epsilon)$ is an $\epsilon$-KKT of problem \eqref{eq: bilevel task 2 a}, then $(x_\epsilon,y_\epsilon,\nu_\epsilon)$ with $\nu_\epsilon\triangleq \lambda_\epsilon\nabla_y g(x_\epsilon,y_\epsilon)\in\mathbb{R}^m$ is an $\epsilon$-KKT of problem \eqref{eq: bilevel task 1 b}, i.e.,
\begin{align*}
        &\|\nabla_y g(x_\epsilon,y_\epsilon)\|^2\leq \epsilon,\\
        &\|\nabla_x f(x_\epsilon,y_\epsilon)+\nabla_{yx}^2 g(x_\epsilon,y_\epsilon)^\top\nu_\epsilon\|^2\leq \epsilon,\\
        &\|\nabla_y f(x_\epsilon,y_\epsilon)+\nabla_{yy}^2 g(x_\epsilon,y_\epsilon)^\top\nu_\epsilon\|^2\leq \epsilon.
    \end{align*}
The sufficient conditions for the existence of a KKT solution for problem \eqref{eq: bilevel task 1 b} have been studied in several studies \cite{ye1995optimality,xiao2023generalized}. For example, in \cite{liu2022bome}, a constant rank constraint qualification (CRCQ) is assumed to guarantee the existence of KKT points. Meanwhile,  \cite{xiao2023generalized} explored the calmness condition and demonstrated that the PL condition for \( g(x, \cdot) \) ensures the existence of a KKT point. However, since our focus is on finding an \(\epsilon\)-KKT point as defined above, we adopt a more general assumption and only require that such a point exists.%


\subsection{ {Safe} Gradient Flow for Bilevel Optimization}
In the context of bilevel optimization,  \cite{sharifi2025safe} recently introduced a safe gradient flow mechanism to solve \eqref{eq: bilevel task 2 a}, where the associated ODE is obtained by solving the following convex quadratic program (QP): 
\begin{align}\label{eq:convex QP problem 1}
    (\dot{x},\dot{y}):= \argmin_{(\dot{x}_d, \dot{y}_d)} & \ \frac{1}{2} \|\dot{x}_d + \nabla_x f\|_2^2 + \frac{1}{2} \|\dot{y}_d + \nabla_y f\|_2^2 \\
    \text{s.t.} & \ \nabla_x h^\top \dot{x}_d + \nabla_y h^\top \dot{y}_d + \alpha h = 0. \nonumber
\end{align}
This QP minimally perturbs the gradient flow on the upper-level objective to enforce exponential convergence of $h$ to zero. The closed-form solution of the QP leads to the ODE,
\begin{equation} \label{eq: gradient flow upper level safe 2} 
\begin{aligned}
    \dot{x} &= -\nabla_x f - \lambda \nabla_{x}h, \\
    \dot{y} &= -\nabla_y f - \lambda\nabla_y h, \\
    \lambda\!&= \begin{cases}
         \frac{-\nabla_x h^\top \nabla_x f\! -\! \nabla_{y}h^\top \nabla_y f\! +\! \alpha h}{\|\nabla_{x}h\|^2 \! + \! \|\nabla_{y}h\|^2}\ & h \neq 0 \\
         0 & h=0
    \end{cases}
\end{aligned}
\end{equation}
Owing to the constraint in \eqref{eq:convex QP problem 1}, this ODE ensures $\dot{h}+\alpha h =0$ along the trajectories, implying exponential convergence of $h$ to zero. As another appealing feature, \eqref{eq: gradient flow upper level safe 2} does not involve any matrix inversion calculation. It is proven in \cite{sharifi2025safe} that when $g$ is strongly convex, a variant of this ODE converges to an $\epsilon$-neighborhood of a stationary solution of \eqref{eq: bilevel task 1} at an $\mathcal{O}(1/t)$ rate ($\epsilon$ arbitrary).

One might be tempted to directly discretize \eqref{eq: gradient flow upper level safe 2} using, for example, the Euler method to derive an iterative algorithm. However, ensuring exponential convergence to the manifold \( \mathcal{M} \) imposes strong assumptions on the lower-level objective \( g \). Specifically, the full rank condition of the matrix \( [\nabla_{yx}^2 g \ \nabla_{yy}^2 g] \) guarantees that the QP \eqref{eq:convex QP problem 1} is always feasible, which in turn ensures the exponential convergence of \( h \) to zero.

In the following section, we will design a quadratic program directly in discrete time that guarantees convergence to a stationary point without requiring strong assumptions on the lower-level objective \(g\).


\section{Proposed Method}
In this section, our goal is to design an iterative algorithm of the form
\begin{align}
    x_{k+1} &= x_k + \gamma \Delta_k^x \notag\\
    y_{k+1} &= y_k + \gamma \Delta_k^y \notag
\end{align}
where $\gamma>0$ is the stepsize and $\Delta_k^x, \Delta_k^y$ are search directions. Inspired by \eqref{eq:convex QP problem 1}, we propose the following QP to obtain these directions, 
\begin{align}\label{eq: first-order lower-level discrete time2}
(\Delta_k^x,\Delta_k^y)=&\argmin_{\Delta^x,\Delta^y}~\frac{1}{2}\|\Delta^x+\nabla_x f_k\|^2_2+\|\Delta^y+\nabla_y f_k\|^2_2\\
&\hbox{s.t.}~\nabla_x h_k^\top \Delta^x + \nabla_y h_k^\top \Delta^y + \alpha \rho_k \leq 0, \nonumber
\end{align}
where $\nabla_x f_k = \nabla_x f(x_k,y_k)$, $\nabla_y f_k = \nabla_y f(x_k,y_k)$, $\nabla_x h_k = \nabla_x h(x_k,y_k)$, $\nabla_y h_k = \nabla_y h(x_k,y_k)$, and $\rho_k = \rho(x_k,y_k)$ is positive whenever $h_k=h(x_k,y_k)>0$ and is zero otherwise, ensuring reduction of infeasibility. The primal and dual solution to this QP is 
\begin{subequations}
\begin{align} 
 \Delta_k^x &=-\nabla_x f_k-\lambda_k\nabla_x h_k \label{eq:deltax_1}\\
 \Delta_k^y &=-\nabla_y f_k -\lambda_k \nabla_y h_k  \label{deltay:lam_1}\\
    \lambda_k &= 
\frac{\left[-\nabla_x h_k^\top \nabla_x f_k - \nabla_y h_k^\top \nabla_y f_k + \alpha \rho_k\right]_{+}}
{\|\nabla_x h_k\|^2 + \|\nabla_y h_k\|^2}. \label{eq:lam_1} 
\end{align}
\end{subequations}
where we use the notation $[x]_+\triangleq \max\{0,x\}$. Our proposed method is outlined in \cref{alg}.
        

\textbf{Choice of $\rho_k$:} The key distinction between \eqref{eq: first-order lower-level discrete time2} and \eqref{eq:convex QP problem 1} lies in the choice of \( \rho_k \), which plays a crucial role in ensuring convergence. To balance objective function value reduction with infeasibility reduction in problem \eqref{eq: bilevel task 2 a}, we consider two choices for the function \( \rho \): \textbf{(i)} \( \rho(x,y) = \|\nabla h(x,y)\|^2 \); \textbf{(ii)}  {\( \rho(x,y) = \|\nabla h(x,y)\| (h(x_0,y_0))^{1/2} \)}. Our first choice of function $\rho$ guarantees the reduction of the first-order KKT condition, leading to a stationary point of the constraint, i.e., $\|\nabla h(x,y)\|\leq \epsilon$. Combined with a commonly used regularity condition in nonconvex constrained optimization, this can be translated into an infeasibility result. Our second choice removes the requirement of such a regularity assumption at the cost of a slower convergence rate result. Both choices ensure that the QP \eqref{eq: first-order lower-level discrete time2} is always feasible. This follows from the fact that when $\nabla_x h =0$ and $\nabla_y h = 0$, we also have $\rho=0$.

\Cref{alg} outlines the proposed method, detailing the iterative procedure for solving the bilevel optimization problem \eqref{eq: bilevel task 1}.

    \begin{algorithm}[t!]
	\caption{Bilevel Approximation via Perturbed Gradient Descent}\label{alg}
	\begin{algorithmic}[1]
	\STATE \textbf{Input}: $\gamma,\alpha,C_0 >0$, $\rho:\reals^n\times\reals^m\to \reals$
	\STATE \textbf{Initialization}: $x_0 \in \mathbb R^n$, and $y_0 \in \mathbb R^m$ such that $\| \nabla_y g(x_0,y_0)\|^2 \leq \alpha^2 C_0$.
	\FOR{$k \geq 0$}
            \STATE Compute $\lambda_k$ using \eqref{eq:lam_1}
            \STATE $\Delta_k^x \gets -\nabla_x f(x_k,y_k)- \lambda_k \nabla_x h(x_k,y_k)$
            \STATE $\Delta_k^y \gets -\nabla_y f(x_k,y_k)- \lambda_k \nabla_y h(x_k,y_k)$
        
            \STATE $x_{k+1}\gets x_k+\gamma \Delta_k^x$
            \STATE $y_{k+1}\gets y_k+\gamma \Delta_k^y$
	\ENDFOR
	\end{algorithmic}
\end{algorithm}


\begin{remark}[Computing $\nabla_x h$ and $\nabla_y h$]
The proposed methods require computing the gradients of \( h \), given by  
\begin{align}
    \nabla_x h &= \nabla_{yx}^2 g^\top  \nabla_y g, \notag \\ 
    \nabla_y h &= \nabla_{yy}^2 g^\top  \nabla_y g. \notag
\end{align}  
At first glance, this might suggest the need to store the Jacobian of \( g \), specifically \( \nabla_{yy}^2 g \) and \( \nabla_{yx}^2 g \). However, modern automatic differentiation frameworks such as PyTorch \cite{paszke2019pytorch} enable direct computation of the required matrix-vector products, eliminating the need to explicitly store these second-order derivatives. This significantly reduces the computational burden.  More specifically, using PyTorch’s \texttt{torch.autograd.grad} with \texttt{grad\_outputs=dgdy}, we can efficiently compute the necessary matrix-vector products without constructing the full Jacobian.
\end{remark}

\section{Convergence Analysis}
In this section, we establish the convergence properties of our proposed algorithm for solving \eqref{eq: bilevel task 2 a}. Our objective is to find a pair \((\bar{x}, \bar{y})\) that satisfies the \(\epsilon\)-KKT conditions defined in Definition \ref{def:epsilon-KKT}. First, we prove a convergence rate of \(\mathcal{O}(1/K^{2/3})\) under a regularity assumption when choosing \(\rho(x,y) = \|\nabla h(x,y)\|^2\). Furthermore, we show that by setting \(\rho(x,y) = \|\nabla h(x,y)\|(h(x_0,y_0))^{1/2}\), we achieve a convergence rate of \(\mathcal{O}(1/K^{1/3})\) without requiring any regularity assumption.
\begin{theorem}\label{thm:conv_rate}
      Suppose that Assumptions \ref{assumption:upperlevel} and \ref{assumption:lowerlevel} hold and $\rho=\|\nabla h\|^2$.
     Let $\{(x_k,y_k,\lambda_k)\}_{k=0}^{K-1}$ be the sequence generated by \cref{alg} with $C_0>0$ and step size $\gamma >0$  such that $\gamma\leq \min\{\alpha,\frac{1}{L_f+\alpha L_h}\}$. Then for all $K \geq 1$, 
     \begin{align*}
    &\frac{1}{K}\sum_{k=0}^{K-1}\left(\|\Delta_k^x \|^2 \!+\!\|\Delta_k^y \|^2\right)\leq \frac{4(f_0+\alpha^3 C_0-\bar{f})}{\gamma K}+\frac{2\alpha C_0}{\gamma L_h K} +  2{\alpha^2  L_h}C_f^2,
     \end{align*}
and, 
     \begin{align*}
    &\frac{1}{K}\sum_{k=0}^{K-1}\left(\|\nabla_x h_k\|^2  +\|\nabla_y h_k\|^2\right) \leq \frac{2\alpha C_0}{\gamma K}+\frac{2L_h(f_0+\alpha^3 C_0-\bar{f})}{\gamma K} +  {\alpha^2  L_h^2}C_f^2
\end{align*}
\end{theorem}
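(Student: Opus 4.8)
The plan is to derive two one-step inequalities---one for the decrease of the upper-level objective $f$, one for the contraction of the infeasibility $h$---telescope each, and then chain them together; the second displayed bound will drop out of the first after a little extra work on the $h$-recursion. \emph{Step 1 (one-step inequalities).} Write $z_k=(x_k,y_k)$, $\Delta_k=(\Delta_k^x,\Delta_k^y)$, $\nabla f_k=(\nabla_x f_k,\nabla_y f_k)$, $\nabla h_k=(\nabla_x h_k,\nabla_y h_k)$, so the update is $z_{k+1}=z_k+\gamma\Delta_k$ with $\Delta_k=-\nabla f_k-\lambda_k\nabla h_k$, $\lambda_k\ge0$ from \eqref{eq:lam_1}. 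Two facts drive everything: feasibility of $\Delta_k$ in the QP gives $\nabla h_k^\top\Delta_k\le-\alpha\rho_k$ unconditionally, and complementarity gives $\lambda_k\,\nabla h_k^\top\Delta_k=-\alpha\lambda_k\rho_k$ unconditionally (active constraint when $\lambda_k>0$, both sides zero when $\lambda_k=0$); substituting $\nabla f_k=-\Delta_k-\lambda_k\nabla h_k$ then yields $\nabla f_k^\top\Delta_k=-\|\Delta_k\|^2+\alpha\lambda_k\rho_k$. Applying the descent lemma to the $L_f$-smooth $f$ and the $L_h$-smooth $h=\|\nabla_y g\|^2$ (smoothness of $h$ coming from Assumption~\ref{assumption:lowerlevel}) along $z_{k+1}=z_k+\gamma\Delta_k$ gives, with $\rho_k=\|\nabla h_k\|^2$,
\begin{align}
f_{k+1}&\le f_k-\gamma\Big(1-\tfrac{L_f\gamma}{2}\Big)\|\Delta_k\|^2+\gamma\alpha\lambda_k\rho_k,\label{eq:plan-f}\\
h_{k+1}&\le h_k-\gamma\alpha\|\nabla h_k\|^2+\tfrac{L_h\gamma^2}{2}\|\Delta_k\|^2.\label{eq:plan-h}
\end{align}

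\emph{Step 2 (telescope and combine).} Summing \eqref{eq:plan-h} over $k=0,\dots,K-1$, using $h_K\ge0$ and the initialization $h_0=\|\nabla_y g(x_0,y_0)\|^2\le\alpha^2C_0$, and dividing by $\gamma\alpha$ gives
\begin{align}
\sum_{k=0}^{K-1}\|\nabla h_k\|^2\le\frac{\alpha C_0}{\gamma}+\frac{L_h\gamma}{2\alpha}\sum_{k=0}^{K-1}\|\Delta_k\|^2.\label{eq:plan-H}
\end{align}
Summing \eqref{eq:plan-f} and using $f_K\ge\bar f$ gives $\gamma(1-\tfrac{L_f\gamma}{2})\sum_k\|\Delta_k\|^2\le f_0-\bar f+\gamma\alpha\sum_k\lambda_k\rho_k$. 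I bound the cross term by $\lambda_k\rho_k=[-\nabla h_k^\top\nabla f_k+\alpha\|\nabla h_k\|^2]_+\le\|\nabla h_k\|\,\|\nabla f_k\|+\alpha\|\nabla h_k\|^2\le\tfrac{s}{2}\|\nabla h_k\|^2+\tfrac{C_f^2}{2s}+\alpha\|\nabla h_k\|^2$ for any $s>0$ (Young's inequality and $\|\nabla f_k\|\le C_f$), then take $s=1/(\alpha L_h)$ and insert \eqref{eq:plan-H} to eliminate $\sum_k\|\nabla h_k\|^2$. After this substitution the right-hand side becomes $f_0-\bar f+\alpha^3C_0+\tfrac{\alpha C_0}{2L_h}+\tfrac12\alpha^2L_hC_f^2\,\gamma K$ together with a multiple of $\sum_k\|\Delta_k\|^2$; moving that multiple to the left, the coefficient of $\sum_k\|\Delta_k\|^2$ there equals $\gamma\big(1-\tfrac{\gamma}{2}(L_f+\alpha L_h)-\tfrac{\gamma}{4\alpha}\big)\ge\tfrac{\gamma}{4}$ since $\gamma\le1/(L_f+\alpha L_h)$ and $\gamma\le\alpha$. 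Multiplying by $4/(\gamma K)$ yields the first displayed bound. For the second, I return to \eqref{eq:plan-H}: because $\gamma\le\alpha$ we have $\tfrac{L_h\gamma}{2\alpha}\le\tfrac{L_h}{2}$, so $\tfrac1K\sum_k\|\nabla h_k\|^2\le\tfrac{\alpha C_0}{\gamma K}+\tfrac{L_h}{2}\cdot\tfrac1K\sum_k\|\Delta_k\|^2$, and substituting the first bound and simplifying produces the second.

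The hard part will be the constant-tracking in Step 2: the Young parameter must be chosen as $s=1/(\alpha L_h)$ so that, once \eqref{eq:plan-H} is substituted, the infeasibility terms assemble precisely into $\alpha^3C_0+\tfrac{\alpha C_0}{2L_h}$ and the gradient-noise term into $\tfrac12\alpha^2L_hC_f^2\gamma K$, while the residual coefficient on $\sum_k\|\Delta_k\|^2$ stays at least $\gamma/4$---this last requirement is exactly what forces the two-part step-size bound $\gamma\le\min\{\alpha,1/(L_f+\alpha L_h)\}$. The only external ingredient is a preliminary lemma recording the Lipschitz modulus $L_h$ of $\nabla h$ (and the joint modulus $L_f$ of $\nabla f=(\nabla_x f,\nabla_y f)$) from Assumptions~\ref{assumption:upperlevel}--\ref{assumption:lowerlevel}.
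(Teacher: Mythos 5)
Your proof is correct and follows essentially the same route as the paper: the same two one-step inequalities (descent lemma on $f$ combined with complementary slackness, descent lemma on $h$ combined with QP feasibility), the same Young parameter $s=1/(\alpha L_h)$, and the same use of $\gamma\le\min\{\alpha,\tfrac{1}{L_f+\alpha L_h}\}$, recovering the theorem's constants exactly. The only organizational difference is that the paper cancels the multiplier term per iteration via the merit function $v=f+\alpha h$, whereas you bound $\lambda_k\rho_k$ directly from the closed-form dual formula and eliminate $\sum_k\|\nabla h_k\|^2$ by substituting the telescoped $h$-recursion at the end.
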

\begin{proof}
    See \Cref{proof:thm_conv_rate}.
\end{proof}

\begin{corollary}\label{cor:rate-nabla-h}
    Let $\{(x_k,y_k,\lambda_k)\}_{k=0}^{K-1}$ be the sequence generated by \cref{alg} with $\alpha=K^{-1/3}$ and $\gamma=\min\{\alpha,\frac{1}{L_f+\alpha L_h}\}$. Under the premises of \cref{thm:conv_rate} we have that $\frac{1}{K} \sum_{k=0}^{K-1} \| \Delta_k^x\|^2 + \| \Delta_k^y\|^2 \leq \mathcal{O}(1/K^{2/3})$ and $\frac{1}{K} \sum_{k=0}^{K-1} \| \nabla h_k\|^2 \leq \mathcal{O}(1/K^{2/3})$. Therefore, there exists $t\in\{0,\hdots, K-1\}$ such that
    \begin{align*}
        \max\{\|\nabla h_t\|^2,\|\nabla f_t+\lambda_t\nabla h_t\|^2\}\leq \epsilon,
    \end{align*}
    within $K=\mathcal{O}(1/\epsilon^{1.5})$ iterations.
\end{corollary}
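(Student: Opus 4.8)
The plan is to read \cref{cor:rate-nabla-h} off \cref{thm:conv_rate} essentially by bookkeeping. The one structural observation I would highlight first is that, by the closed forms \eqref{eq:deltax_1} and \eqref{deltay:lam_1}, the search directions are exactly the negated stationarity residual: $(\Delta_k^x,\Delta_k^y)=-\big(\nabla_x f_k+\lambda_k\nabla_x h_k,\ \nabla_y f_k+\lambda_k\nabla_y h_k\big)$, so that $\|\Delta_k^x\|^2+\|\Delta_k^y\|^2=\|\nabla f_k+\lambda_k\nabla h_k\|^2$. Thus the first averaged bound of \cref{thm:conv_rate} already controls the averaged squared norm of the Lagrangian-gradient residual, while the second one controls the averaged $\|\nabla h_k\|^2=\|\nabla_x h_k\|^2+\|\nabla_y h_k\|^2$.

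Next I would substitute $\alpha=K^{-1/3}$ and $\gamma=\min\{\alpha,\tfrac{1}{L_f+\alpha L_h}\}$ and track the order in $K$. Since $\alpha=K^{-1/3}\le 1$ for $K\ge 1$, we have $\tfrac{1}{L_f+\alpha L_h}\ge\tfrac{1}{L_f+L_h}$, and for all $K$ larger than a constant depending only on $L_f,L_h$ this exceeds $\alpha$, so $\gamma=\alpha=K^{-1/3}$ and $\gamma K=K^{2/3}$. Moreover $\alpha^3 C_0=C_0/K$ is bounded, hence $f_0+\alpha^3C_0-\bar f=\mathcal{O}(1)$. Feeding these into the two displays of \cref{thm:conv_rate}, every term on the right is $\mathcal{O}(K^{-2/3})$: the $\tfrac{1}{\gamma K}$-type terms equal $\Theta(K^{-2/3})$, the $\tfrac{\alpha C_0}{\gamma L_h K}=\Theta(K^{-1})$ and $\tfrac{\alpha C_0}{\gamma K}=\Theta(K^{-1})$ terms are smaller, and the $\alpha^2 L_h C_f^2$ and $\alpha^2 L_h^2 C_f^2$ terms equal $\Theta(K^{-2/3})$. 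This yields $\tfrac1K\sum_{k=0}^{K-1}(\|\Delta_k^x\|^2+\|\Delta_k^y\|^2)\le\mathcal{O}(K^{-2/3})$ and $\tfrac1K\sum_{k=0}^{K-1}\|\nabla h_k\|^2\le\mathcal{O}(K^{-2/3})$.

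Finally I would add the two averaged bounds and use the identity from the first step to get $\tfrac1K\sum_{k=0}^{K-1}\big(\|\nabla h_k\|^2+\|\nabla f_k+\lambda_k\nabla h_k\|^2\big)\le\mathcal{O}(K^{-2/3})$. Letting $t\in\{0,\dots,K-1\}$ be an index that minimizes the summand — which cannot exceed the average — gives $\|\nabla h_t\|^2+\|\nabla f_t+\lambda_t\nabla h_t\|^2\le\mathcal{O}(K^{-2/3})$, hence $\max\{\|\nabla h_t\|^2,\|\nabla f_t+\lambda_t\nabla h_t\|^2\}\le\mathcal{O}(K^{-2/3})$. Demanding this $\mathcal{O}(K^{-2/3})$ quantity be at most $\epsilon$ then forces $K=\mathcal{O}(\epsilon^{-3/2})$, which is the stated complexity. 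There is no genuinely delicate step: the two points to be careful with are the identification of $\|\Delta_k^x\|^2+\|\Delta_k^y\|^2$ with the squared stationarity residual, and checking that the tuning $\alpha=K^{-1/3}$ makes $\gamma$ scale like $K^{-1/3}$, so that the $1/(\gamma K)$ terms and the $\alpha^2$ terms are of the same order $K^{-2/3}$ — which is precisely why that exponent was chosen.
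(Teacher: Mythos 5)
Your proposal is correct and follows essentially the same route as the paper: substitute $\alpha=K^{-1/3}$ and $\gamma=\min\{\alpha,\tfrac{1}{L_f+\alpha L_h}\}=\Omega(K^{-1/3})$ into the two bounds of \cref{thm:conv_rate}, observe every term is $\mathcal{O}(K^{-2/3})$, and select the best iterate $t$ to obtain the $\epsilon$-bound within $K=\mathcal{O}(\epsilon^{-3/2})$ iterations. The only cosmetic difference is that you make explicit the identity $\|\Delta_k^x\|^2+\|\Delta_k^y\|^2=\|\nabla f_k+\lambda_k\nabla h_k\|^2$ from \eqref{eq:deltax_1}--\eqref{deltay:lam_1} (which the paper leaves implicit) and you pick $t$ minimizing the sum rather than the max of the two residuals, both of which are immaterial.
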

\begin{proof}
    See \cref{proof:corollary_rate_nabla_h}.
\end{proof}
\begin{corollary}\label{cor:rate_nabla_g}
    Suppose the following regularity assumption hold: there exists $c>0$ such that $\|\nabla_y g(x,y)\|\leq c\|\nabla h(x,y)\|$ for any $(x,y)$. Under the premises of Corollary \ref{cor:rate-nabla-h} there exists, $t\in\{0,\hdots, K-1\}$ such that $(x_t,y_t,\lambda_t)$ is an $\epsilon$-KKT point of problem \eqref{eq: bilevel task 2 a}, i.e, 
    \begin{align*}
        \max\{\|\nabla_y g_t\|^2,\|\nabla f_t+\lambda_t\nabla h_t\|^2\}\leq \epsilon,
    \end{align*}
    within $K=\mathcal{O}(1/\epsilon^{1.5})$ iterations.
\end{corollary}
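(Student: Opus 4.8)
The plan is to obtain \cref{cor:rate_nabla_g} as an immediate consequence of \cref{cor:rate-nabla-h} together with the new regularity inequality, so essentially all the analytic work has already been done. With $\alpha = K^{-1/3}$ and $\gamma = \min\{\alpha, 1/(L_f + \alpha L_h)\}$, \cref{cor:rate-nabla-h} gives $\frac1K\sum_{k=0}^{K-1}(\|\Delta_k^x\|^2 + \|\Delta_k^y\|^2) = \mathcal{O}(K^{-2/3})$ and $\frac1K\sum_{k=0}^{K-1}\|\nabla h_k\|^2 = \mathcal{O}(K^{-2/3})$. Choosing the index $t \in \{0,\dots,K-1\}$ that minimizes $\|\Delta_k^x\|^2 + \|\Delta_k^y\|^2 + \|\nabla h_k\|^2$ and using that a minimum is bounded by the average, we get, after $K = \mathcal{O}(\epsilon^{-3/2})$ iterations, both $\|\nabla h_t\|^2 \le \epsilon$ and $\|\Delta_t^x\|^2 + \|\Delta_t^y\|^2 \le \epsilon$. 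By \eqref{eq:deltax_1}--\eqref{deltay:lam_1} the latter quantity equals $\|\nabla f_t + \lambda_t \nabla h_t\|^2$, so the only remaining piece is the feasibility part of \cref{def:epsilon-KKT}, namely $h(x_t,y_t) = \|\nabla_y g_t\|_2^2 \le \epsilon$.

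This is precisely where the regularity condition $\|\nabla_y g(x,y)\| \le c\,\|\nabla h(x,y)\|$ is used: evaluating it at $(x_t,y_t)$ and squaring yields $\|\nabla_y g_t\|^2 \le c^2\|\nabla h_t\|^2 \le c^2\epsilon$, hence $\max\{\|\nabla_y g_t\|^2, \|\nabla f_t + \lambda_t\nabla h_t\|^2\} \le \max\{1,c^2\}\,\epsilon$. To absorb the constant I would run \cref{alg} to the accuracy $\epsilon' := \epsilon/\max\{1,c^2\}$; since $K = \mathcal{O}((\epsilon')^{-3/2}) = \mathcal{O}(\epsilon^{-3/2})$ up to a constant factor, the same index $t$ gives an $\epsilon$-KKT point of \eqref{eq: bilevel task 2 a} within $\mathcal{O}(\epsilon^{-3/2})$ iterations, which is the claim.

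There is no genuine analytic obstacle here; the only thing worth flagging is \emph{why} the regularity condition is the correct hypothesis. Since $\nabla_x h = (\nabla_{yx}^2 g)^\top \nabla_y g$ and $\nabla_y h = (\nabla_{yy}^2 g)^\top \nabla_y g$, \cref{assumption:lowerlevel} (bounded second-order derivatives) already delivers the easy direction $\|\nabla h\| \le L_h\|\nabla_y g\|$, but the reverse bound invoked above can fail outright: $\nabla h$ vanishes whenever $\nabla_y g$ lies in the common kernel of these Hessian blocks — e.g.\ when $\nabla_{yy}^2 g$ is singular — so driving $\|\nabla h\|$ to zero need not drive $\|\nabla_y g\|$ to zero. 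The regularity inequality postulates exactly the missing non-degeneracy, converting the stationarity-of-the-constraint guarantee of \cref{cor:rate-nabla-h} into the infeasibility guarantee required by \cref{def:epsilon-KKT}. The rest is constant bookkeeping, consistent with \cref{thm:conv_rate} (whose bounds already keep $h_k$ bounded along the iterates), so no separate control of $h(x_t,y_t)$ is needed beyond the regularity step.
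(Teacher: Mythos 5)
Your proposal is correct and follows essentially the same route as the paper: invoke the rates of \cref{cor:rate-nabla-h}, use the regularity inequality to convert the bound on $\|\nabla h\|^2$ into one on $\|\nabla_y g\|^2=h$, identify $\|\Delta_t\|^2$ with $\|\nabla f_t+\lambda_t\nabla h_t\|^2$ via \eqref{eq:deltax_1}--\eqref{deltay:lam_1}, and select a best index $t$ by comparing the minimum to the average. The only cosmetic differences are that the paper applies the regularity bound termwise before choosing $t$ (via an $\argmin$ of the max) and hides the constant $c^2$ in the $\mathcal{O}(\cdot)$, whereas you apply it at $t$ and rescale $\epsilon$; these are equivalent.
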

\begin{proof}
    See \cref{proof:corollary_rate_nabla_g}.
\end{proof}
\begin{remark}\label{rem:regularity_condition}
 {The regularity condition used in Corollary \ref{cor:rate_nabla_g} is equivalent to the function $h$
satisfying the PL condition, which can be a more restrictive requirement than merely assuming that the lower-level function $g$ is PL}. However, this is a commonly used regularity condition in the optimization literature and has been employed in the convergence analysis of iterative algorithms for non-convex optimization problems with functional constraints \cite{bolte2018nonconvex,sahin2019inexact,li2021rate,lin2022complexity, lu2022single, li2024stochastic} corresponding to problem \eqref{eq: bilevel task 1 b}. The properties and practical relevance of this assumption have been extensively examined in \cite{bolte2018nonconvex,sahin2019inexact}, where it is shown to be a weaker condition than the Mangasarian-Fromovitz Constraint Qualification (MFCQ). In the context of bilevel optimization, a notable example of a lower-level function $g$ that satisfies this regularity assumption is \( g(x,y) = \frac{1}{2} \|Ay - Bx\|^2 \), where \( A \in \mathbb{R}^{p \times m} \) and \( B \in \mathbb{R}^{p \times n} \) which arises as a loss function in various applications, including robust and adversarial learning.
\end{remark}

While the considered regularity assumption in the above result leads to a favorable convergence rate for finding an $\epsilon$-KKT point, it imposes limitations on the class of optimization problems to which our proposed algorithm can effectively be applied. In response to this limitation, we next show that by selecting a different function $\rho$, we can eliminate the need for the stringent regularity assumption.


\begin{theorem}\label{thm:conv_rate2}
      Suppose that  Assumptions \ref{assumption:upperlevel} and \ref{assumption:lowerlevel} hold and  {$\rho(x,y)=\|\nabla h(x,y)\|(h(x_0,y_0))^{1/2}$}.
     Let $\{(x_k,y_k,\lambda_k)\}_{k=0}^{K-1}$ be the sequence generated by \cref{alg} with $C_0>0$ and stepsize $\gamma >0$  such that $\gamma=\min\{\frac{1}{K^{2/3}},\frac{1}{L_f}\}$. Define   {$B_\Delta\triangleq 2C_f+\alpha^2 \sqrt{C_0}$}, then for all $K \geq 1$, 
     \begin{align*}
    &\frac{1}{K}\sum_{k=0}^{K-1}\left(\|\Delta_k^x \|^2 +\|\Delta_k^y \|^2\right)
    \leq \frac{2(f_0-\bar{f})}{\gamma K}+\alpha^2 (B_\Delta^2 +C_0)
    \end{align*}
and, 
     \begin{align*}
    \frac{1}{K}\sum_{k=0}^{K-1} h_k\leq \alpha^2 C_0+\frac{\gamma^2L_h B_\Delta^2}{2}\frac{(K-1)}{2}.
    \end{align*}
\end{theorem}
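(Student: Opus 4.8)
The plan is to run a descent-type argument on an appropriate potential function, mirroring the structure used for Theorem \ref{thm:conv_rate} but adapted to the new choice $\rho(x,y)=\|\nabla h(x,y)\|\,(h(x_0,y_0))^{1/2}$. First I would record the basic facts about the QP solution: from \eqref{eq:deltax_1}--\eqref{eq:lam_1} we have $\lambda_k\ge 0$ and the feasibility of the QP gives $\nabla_x h_k^\top\Delta_k^x+\nabla_y h_k^\top\Delta_k^y+\alpha\rho_k\le 0$, hence $\nabla h_k^\top\Delta_k\le-\alpha\rho_k\le 0$. I would also need a uniform bound on the search direction: since $\|\nabla f_k\|\le C_f$ by Assumption \ref{assumption:upperlevel}, and using the closed form of $\lambda_k$ together with $\rho_k=\|\nabla h_k\|(h_0)^{1/2}$, one gets $\lambda_k\|\nabla h_k\|\le \|\nabla f_k\| + \alpha(h_0)^{1/2}$, so that $\|\Delta_k\|\le 2C_f+\alpha(h_0)^{1/2}$. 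Combined with the initialization $h_0=\|\nabla_y g(x_0,y_0)\|^2\le\alpha^2C_0$, i.e. $(h_0)^{1/2}\le\alpha\sqrt{C_0}$, this yields $\|\Delta_k\|\le B_\Delta=2C_f+\alpha^2\sqrt{C_0}$, which is exactly the quantity defined in the statement.

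For the first inequality, I would apply $L_f$-smoothness of $f$ (with $L_f$ the joint Lipschitz constant of $\nabla f$, controlled by $L_x^f, L_y^f$) along the update $(x_{k+1},y_{k+1})=(x_k,y_k)+\gamma\Delta_k$:
\[
f_{k+1}\le f_k+\gamma\langle\nabla f_k,\Delta_k\rangle+\frac{L_f\gamma^2}{2}\|\Delta_k\|^2.
\]
The key algebraic identity from the QP optimality is $\langle\nabla f_k,\Delta_k\rangle=-\|\Delta_k\|^2-\lambda_k\langle\nabla h_k,\Delta_k\rangle\le-\|\Delta_k\|^2$, using $\lambda_k\ge0$ and $\langle\nabla h_k,\Delta_k\rangle\le0$. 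Substituting and using $\gamma\le 1/L_f$ gives $f_{k+1}\le f_k-\frac{\gamma}{2}\|\Delta_k\|^2$. Telescoping from $0$ to $K-1$ and dividing by $\gamma K$ would give $\frac1K\sum\|\Delta_k\|^2\le\frac{2(f_0-\bar f)}{\gamma K}$, which is even stronger than the stated bound — so I suspect the extra $\alpha^2(B_\Delta^2+C_0)$ term is slack absorbed to make the subsequent corollary's bookkeeping cleaner, or it arises because the descent inequality is actually applied more loosely (e.g. only $\gamma=\min\{K^{-2/3},1/L_f\}$ is used and a cruder bound on $\langle\nabla f_k,\Delta_k\rangle$ is taken). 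I would just carry the clean telescoping and then note $\frac1K\sum\|\Delta_k\|^2\le\frac{2(f_0-\bar f)}{\gamma K}\le\frac{2(f_0-\bar f)}{\gamma K}+\alpha^2(B_\Delta^2+C_0)$ trivially.

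For the second inequality I would track how $h_k$ can grow. Using $L_h$-smoothness of $h$ (here $L_h$ is the Lipschitz constant of $\nabla h$, finite by Assumption \ref{assumption:lowerlevel} since $h=\|\nabla_y g\|^2$ and the relevant derivatives are bounded and Lipschitz),
\[
h_{k+1}\le h_k+\gamma\langle\nabla h_k,\Delta_k\rangle+\frac{L_h\gamma^2}{2}\|\Delta_k\|^2\le h_k-\gamma\alpha\rho_k+\frac{L_h\gamma^2}{2}B_\Delta^2,
\]
where I used $\langle\nabla h_k,\Delta_k\rangle\le-\alpha\rho_k$ and $\|\Delta_k\|\le B_\Delta$. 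Since $\rho_k\ge0$, dropping the $-\gamma\alpha\rho_k$ term gives $h_{k+1}\le h_k+\frac{L_h\gamma^2}{2}B_\Delta^2$, so inductively $h_k\le h_0+k\cdot\frac{L_h\gamma^2}{2}B_\Delta^2\le\alpha^2C_0+\frac{L_h\gamma^2 B_\Delta^2}{2}k$. Averaging over $k=0,\dots,K-1$ and using $\sum_{k=0}^{K-1}k=\frac{K(K-1)}{2}$ yields $\frac1K\sum h_k\le\alpha^2C_0+\frac{\gamma^2 L_h B_\Delta^2}{2}\cdot\frac{K-1}{2}$, exactly as claimed.

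The main obstacle, and the only place requiring genuine care rather than routine bookkeeping, is establishing the uniform bound $\|\Delta_k\|\le B_\Delta$ cleanly: one must handle the case $\nabla h_k=0$ (then $\rho_k=0$, $\lambda_k=0$, $\Delta_k=-\nabla f_k$, trivially bounded by $C_f\le B_\Delta$) separately from $\nabla h_k\ne0$, and in the latter case verify that the numerator $[-\nabla_x h_k^\top\nabla_x f_k-\nabla_y h_k^\top\nabla_y f_k+\alpha\rho_k]_+$ divided by $\|\nabla h_k\|^2$ produces $\lambda_k\|\nabla h_k\|\le C_f+\alpha(h_0)^{1/2}$ via Cauchy–Schwarz and the special form $\rho_k=\|\nabla h_k\|(h_0)^{1/2}$ (note the factor $\|\nabla h_k\|$ in $\rho_k$ is exactly what makes $\alpha\rho_k/\|\nabla h_k\|^2$ scale like $1/\|\nabla h_k\|$, keeping $\lambda_k\|\nabla h_k\|$ bounded — this is precisely why this $\rho$ was chosen). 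Everything else is a direct consequence of the two smoothness descent inequalities above.
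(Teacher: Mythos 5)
Your bound $\|\Delta_k\|\le B_\Delta$ and your derivation of the second inequality (drop $-\gamma\alpha\rho_k\le 0$ in the $h$-descent, telescope, average) are correct and coincide with the paper's argument. However, your proof of the first inequality contains a genuine sign error. From $\Delta_k=-\nabla f_k-\lambda_k\nabla h_k$ you get
\[
\langle\nabla f_k,\Delta_k\rangle=-\|\Delta_k\|^2-\lambda_k\langle\nabla h_k,\Delta_k\rangle,
\]
and since $\lambda_k\ge 0$ and $\langle\nabla h_k,\Delta_k\rangle\le 0$, the term $-\lambda_k\langle\nabla h_k,\Delta_k\rangle$ is \emph{nonnegative}, so the identity gives $\langle\nabla f_k,\Delta_k\rangle\ge-\|\Delta_k\|^2$, not $\le-\|\Delta_k\|^2$ as you claim. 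Consequently your conclusion $f_{k+1}\le f_k-\tfrac{\gamma}{2}\|\Delta_k\|^2$ and the "even stronger" bound $\tfrac1K\sum\|\Delta_k\|^2\le\tfrac{2(f_0-\bar f)}{\gamma K}$ do not follow: the QP perturbation can \emph{increase} $f$, and this increase must be controlled rather than dropped. The extra term $\alpha^2(B_\Delta^2+C_0)$ in the statement is not bookkeeping slack; it is exactly the price of this perturbation.

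The repair is the paper's route, which you already used implicitly for Theorem \ref{thm:conv_rate}: by complementary slackness of the QP, $\lambda_k\bigl(\nabla h_k^\top\Delta_k+\alpha\rho_k\bigr)=0$, hence
\[
-\gamma\lambda_k\langle\nabla h_k,\Delta_k\rangle=\gamma\alpha\lambda_k\rho_k=\gamma\alpha\lambda_k\|\nabla h_k\|\,(h_0)^{1/2}\le\gamma\alpha B_\Delta (h_0)^{1/2}\le\frac{\gamma\alpha^2}{2}B_\Delta^2+\frac{\gamma}{2}h_0\le\frac{\gamma\alpha^2}{2}\bigl(B_\Delta^2+C_0\bigr),
\]
using $\lambda_k\|\nabla h_k\|\le B_\Delta$, Young's inequality, and $h_0\le\alpha^2 C_0$. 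Plugging this into the $L_f$-smoothness inequality with $\gamma\le 1/L_f$ gives $\tfrac{\gamma}{2}\|\Delta_k\|^2\le f_k-f_{k+1}+\tfrac{\gamma\alpha^2}{2}(B_\Delta^2+C_0)$, and telescoping yields exactly the stated bound $\tfrac1K\sum_{k}\|\Delta_k\|^2\le\tfrac{2(f_0-\bar f)}{\gamma K}+\alpha^2(B_\Delta^2+C_0)$. With this correction your argument matches the paper's proof.
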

\begin{proof}
    See \Cref{proof:thm_conv_rate_2}.
\end{proof}

\begin{corollary}\label{cor:rate_nabla_g2}
     Let $\{(x_k,y_k,\lambda_k)\}_{k=0}^{K-1}$ be the sequence generated by \cref{alg} with $\alpha=K^{-1/6}$ and $\gamma=\min\{\frac{1}{K^{2/3}},\frac{1}{L_f}\}$. Under the premises of \cref{thm:conv_rate2} we have that $\frac{1}{K} \sum_{k=0}^{K-1} \| \Delta_k^x\|^2 + \| \Delta_k^y\|^2 \leq \mathcal{O}(1/K^{1/3})$ and $\frac{1}{K} \sum_{k=0}^{K-1}   h_k \leq \mathcal{O}(1/K^{1/3})$. Therefore, there exists $t\in\{0,\hdots, K-1\}$ such that $(x_t,y_t,\lambda_t)$ is an $\epsilon$-KKT point of problem \eqref{eq: bilevel task 2 a}, i.e., 
    \begin{align*}
        \max\{\|\nabla_y g_t\|^2,\|\nabla f_t+\lambda_t\nabla h_t\|^2\}\!\leq \!\epsilon,
    \end{align*}
    within $K=\mathcal{O}(1/\epsilon^{3})$ iterations.
\end{corollary}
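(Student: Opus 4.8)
The plan is to substitute the prescribed schedules $\alpha=K^{-1/6}$ and $\gamma=\min\{K^{-2/3},1/L_f\}$ into the two estimates of \cref{thm:conv_rate2} and bookkeep the dependence of each term on $K$. A first observation is that since $K\ge 1$ forces $\alpha\le 1$, the quantity $B_\Delta=2C_f+\alpha^2\sqrt{C_0}$ is bounded above by the absolute constant $2C_f+\sqrt{C_0}$, and hence so is $B_\Delta^2+C_0$. A second observation is that for all $K$ large enough that $K^{-2/3}\le 1/L_f$ the minimum defining $\gamma$ equals $K^{-2/3}$, so $\gamma K=K^{1/3}$ and $\gamma^2 K=K^{-1/3}$; for the finitely many smaller values of $K$ one has $\gamma=1/L_f$, which only shrinks the terms proportional to $1/(\gamma K)$, so the asymptotic rates are unaffected.

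With these substitutions the first bound of \cref{thm:conv_rate2} reads $\frac1K\sum_{k=0}^{K-1}(\|\Delta_k^x\|^2+\|\Delta_k^y\|^2)\le \frac{2(f_0-\bar f)}{K^{1/3}}+K^{-1/3}(B_\Delta^2+C_0)=\mathcal{O}(K^{-1/3})$, and the second reads $\frac1K\sum_{k=0}^{K-1}h_k\le K^{-1/3}C_0+\frac{L_hB_\Delta^2}{4}\,\gamma^2(K-1)\le K^{-1/3}\left(C_0+\frac{L_hB_\Delta^2}{4}\right)=\mathcal{O}(K^{-1/3})$, where the last step uses $\gamma^2(K-1)\le\gamma^2K=K^{-1/3}$. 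Adding the two inequalities and using nonnegativity of both sequences, a standard averaging argument produces an index $t\in\{0,\dots,K-1\}$ at which simultaneously $\|\Delta_t^x\|^2+\|\Delta_t^y\|^2=\mathcal{O}(K^{-1/3})$ and $h_t=\mathcal{O}(K^{-1/3})$.

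It remains to recast this as the $\epsilon$-KKT guarantee. By the closed-form primal solution of the QP, \eqref{eq:deltax_1}--\eqref{deltay:lam_1}, one has $\Delta_t^x=-(\nabla_x f_t+\lambda_t\nabla_x h_t)$ and $\Delta_t^y=-(\nabla_y f_t+\lambda_t\nabla_y h_t)$, so $\|\Delta_t^x\|^2+\|\Delta_t^y\|^2=\|\nabla f_t+\lambda_t\nabla h_t\|^2$; and by the definition of $h$ in \eqref{eq: bilevel task 2 a}, $h_t=\|\nabla_y g(x_t,y_t)\|^2$. Consequently $\max\{\|\nabla_y g_t\|^2,\ \|\nabla f_t+\lambda_t\nabla h_t\|^2\}\le\mathcal{O}(K^{-1/3})$, so choosing $K$ large enough that this $\mathcal{O}(K^{-1/3})$ bound is at most $\epsilon$, which requires $K=\mathcal{O}(\epsilon^{-3})$, makes $(x_t,y_t,\lambda_t)$ an $\epsilon$-KKT point of \eqref{eq: bilevel task 2 a}.

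The argument is mostly bookkeeping; the only delicate point is the triple balance of error sources: the optimization term $1/(\gamma K)$ in the first bound decreases in $\gamma$, the feasibility-drift term $\gamma^2 L_h B_\Delta^2 K$ in the second bound increases in $\gamma$, and the term $\alpha^2 C_0$ enters both bounds with $\alpha$ pulling in opposite directions. I expect the crux of writing the proof carefully to be verifying that the choices $\gamma\sim K^{-2/3}$ and $\alpha\sim K^{-1/6}$ equalize all three contributions at the common order $K^{-1/3}$ and that no stray constant or lower-order term dominates, in particular across the two regimes of the $\min$ defining $\gamma$.
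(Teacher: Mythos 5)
Your proposal is correct and follows essentially the same route as the paper's proof: substitute $\alpha=K^{-1/6}$ and $\gamma=\min\{K^{-2/3},1/L_f\}$ (so $\gamma=\Omega(K^{-2/3})$) into the two bounds of \cref{thm:conv_rate2}, obtain $\mathcal{O}(K^{-1/3})$ averages, pick a common index $t$ by an averaging/argmin argument, and translate $\|\Delta_t\|^2$ and $h_t$ into the KKT residual via the QP closed form and $h=\|\nabla_y g\|^2$, giving $K=\mathcal{O}(\epsilon^{-3})$. One inconsequential slip: in the regime $\gamma=1/L_f$ (the finitely many $K<L_f^{3/2}$) the smaller $\gamma$ actually enlarges the $1/(\gamma K)$ terms and shrinks the $\gamma^2 K$ drift term, not the other way around, but since $1/(\gamma K)\le L_f K^{-1/3}$ there the asymptotic conclusion is unaffected.
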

            
\begin{proof}
    See \cref{proof:corollary_rate_nabla_g2}.
\end{proof}

    \begin{figure*}[t]
    \centering
    \includegraphics[width=0.24\linewidth]{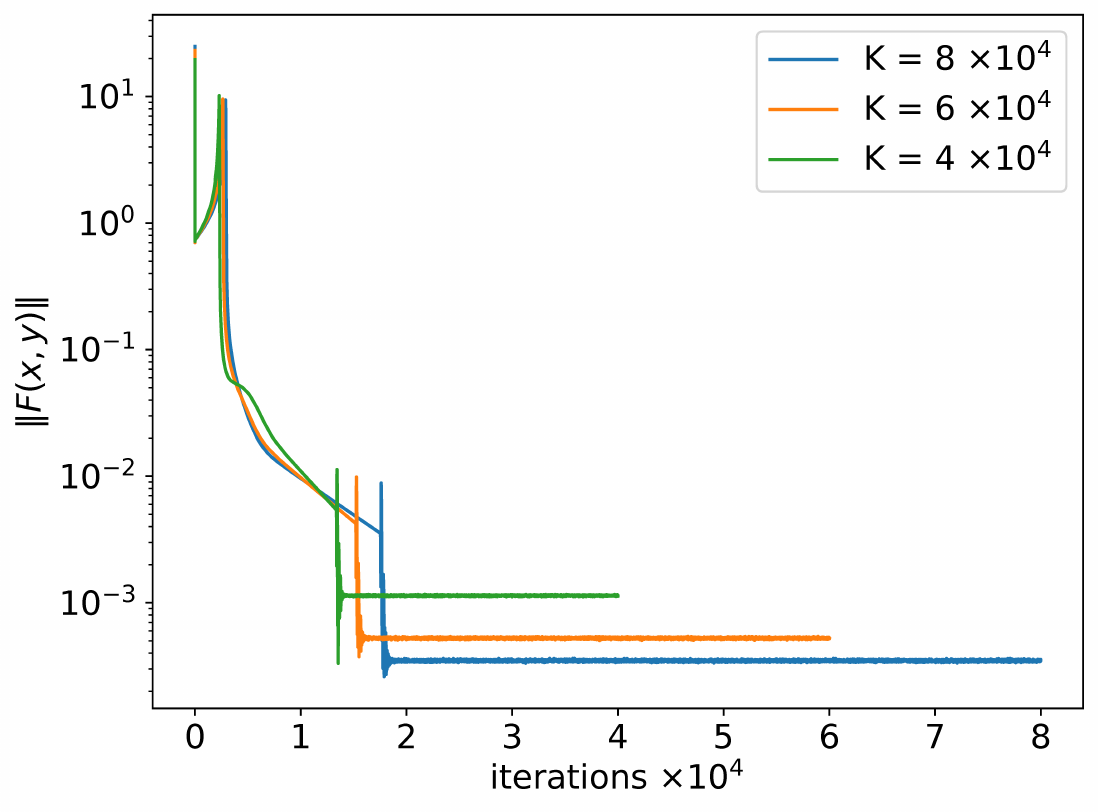}
    \includegraphics[width=0.24\linewidth]{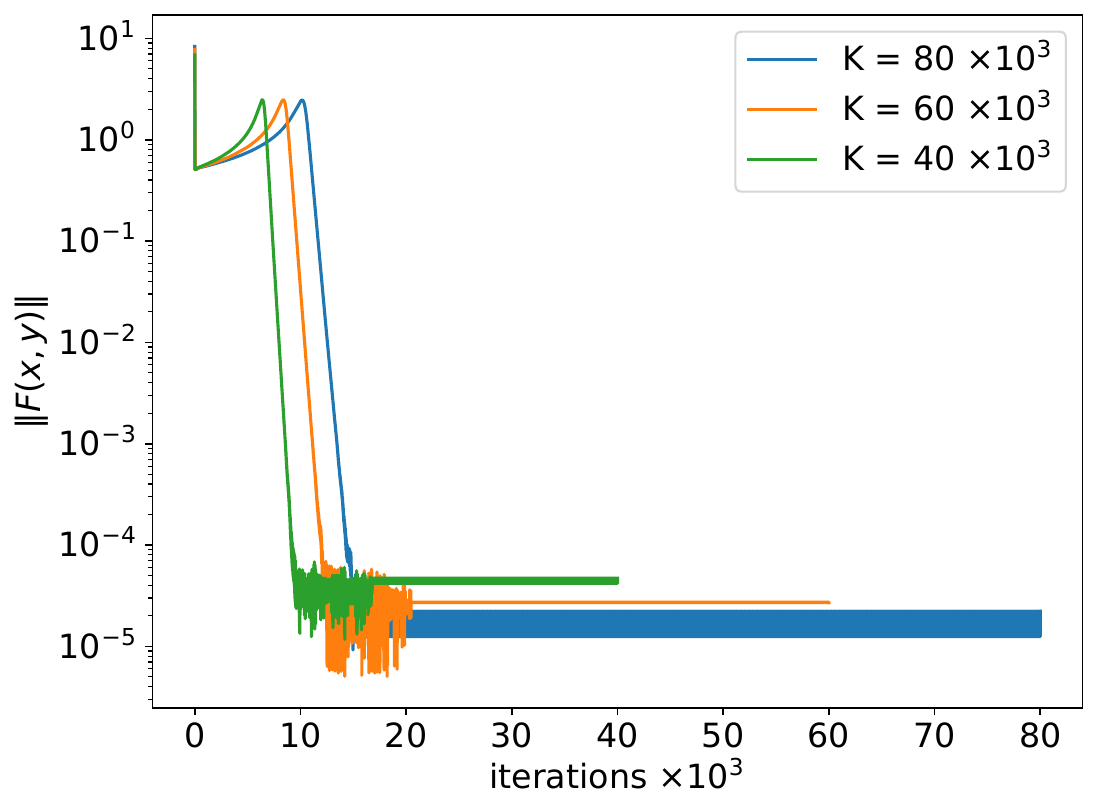}
    \includegraphics[width=0.24\linewidth]{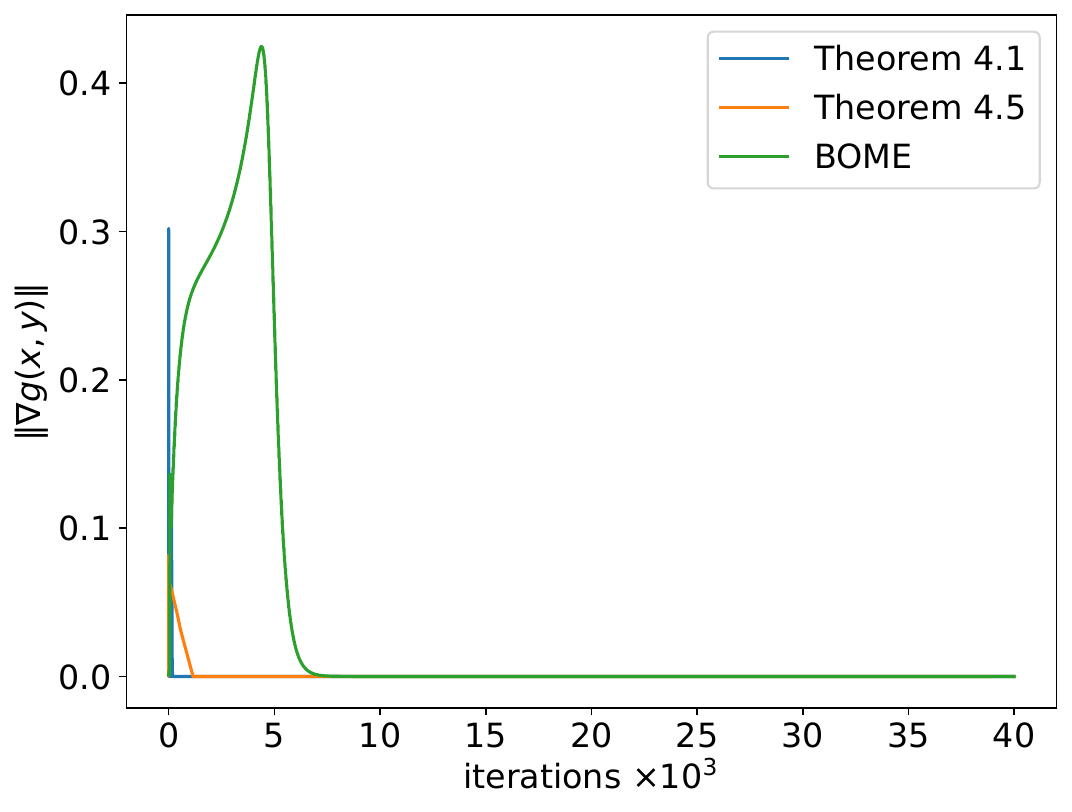}
    \includegraphics[width=0.24\linewidth]{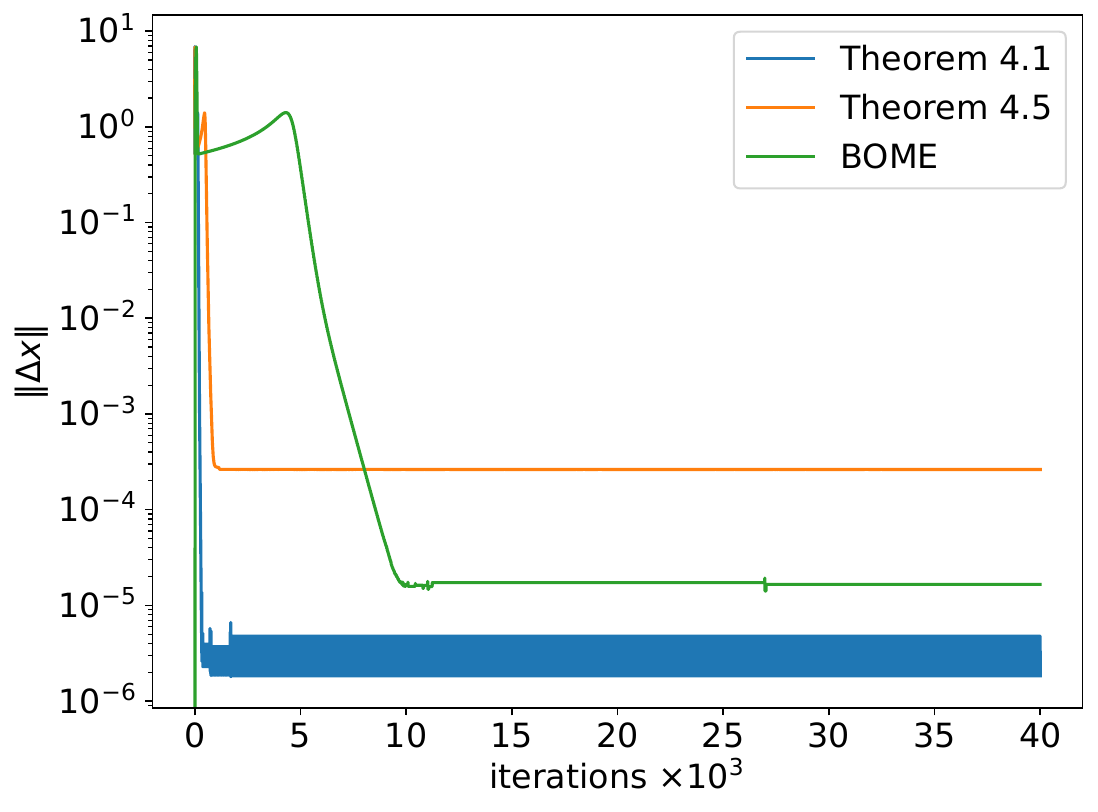}
    \caption{
    Effect of the number of iterations on the convergence on the strongly convex synthetic example problem and comparison with BOME \cite{liu2022bome} on the non-convex synthetic example benchmark.
    Parameter choice from \textit{(leftmost)} \Cref{thm:conv_rate} and \textit{(mid-left)} \Cref{thm:conv_rate2} on the synthetic example with the strongly convex lower-level function.
    \textit{(mid-right \& rightmost)} comparison with BOME \cite{liu2022bome} on the synthetic example with non-convex lower-level function.
    }
    \label{fig:experiment on K}
    \end{figure*}

\section{Numerical Experiments}
In this section, we numerically evaluate the performance of our method compared with other methods on both synthetic problems and a data hyper-cleaning (DHC) problem on the MNIST \cite{lecun2010mnist} dataset. 
For our methods, we include the result for different choices for $\alpha$ and $\gamma$ from Theorem (\ref{thm:conv_rate}) and Theorem (\ref{thm:conv_rate2}). For the experiments where the lower level function is strongly convex, we compare the norm of hyper-gradient ($\|F(x,y)\|$) as the metric for convergence. However, for the experiments where the lower level function is not strongly convex, we compare $\|\Delta\|$ since the hyper-gradient might not be well-defined.
 {We also provide additional experiments in \Cref{appendix:exp}.
We will publicly release our code after the review process.}

\subsection{Synthetic Example}
    To showcase the performance of our method, we start with two simple numerical examples.
    \subsubsection{Strongly Convex Lower-level} \label{subsection:toy_example_convex}
    Consider the following basic bilevel optimization problem 
    \begin{align}
            &\min_x \quad  \sin(c^\top x + d^\top y^\star(x)) +  \log(\|x+y^\star(x)\|^2 + 1) \notag \\
            &\text{s.t.} \quad  y^\star(x) \in \argmin_y  \tfrac{1}{2} \|Hy - x\|^2,\notag
    \end{align}
    where $x, y, c, d \in \mathbb{R}^{20}$ and $H \in \mathbb{R}^{20 \times 20}$ is randomly generated in a way such that its condition number is no larger than 10.

    The first two plots in \Cref{fig:experiment on K} show the reduction in the norm of hyper-gradient with respect to the number of iterations using parameter choices from both \Cref{thm:conv_rate} and \Cref{thm:conv_rate2}. In both cases, as we expect, we can see the algorithm converges to a more accurate solution as we increase the number of iterations, even though it takes longer to converge.
    

    \subsubsection{Non-convex Lower-level} \label{subsection:toy_example_nonconvex}
    To also test our method on a benchmark with a non-convex lower-level problem, we slightly change the previous setup and design a lower-level problem such that the it is non-convex. Consider the following bilevel optimization problem
    \begin{align}
            &\min_x \quad  \sin(c^\top x + d^\top y^\star(x)) +  \log(\|x+y^\star(x)\|^2 + 1) \notag \\
            &\text{s.t.} \quad  y^\star(x) \in \argmin_y  \cos(\tfrac{1}{2} \|Hy - x\|^2),\notag
    \end{align}
     where the parameters are generated in the same manner as in the previous subsection. 
     
     The last two plots in \Cref{fig:experiment on K} show the comparison between our method and BOME \cite{liu2022bome}, showcasing that our method remains close to the lower-level optimal point $y^\star(x)$, while reducing the KKT stationary condition.

     \begin{figure*}[t]
        \includegraphics[width=0.24\linewidth]{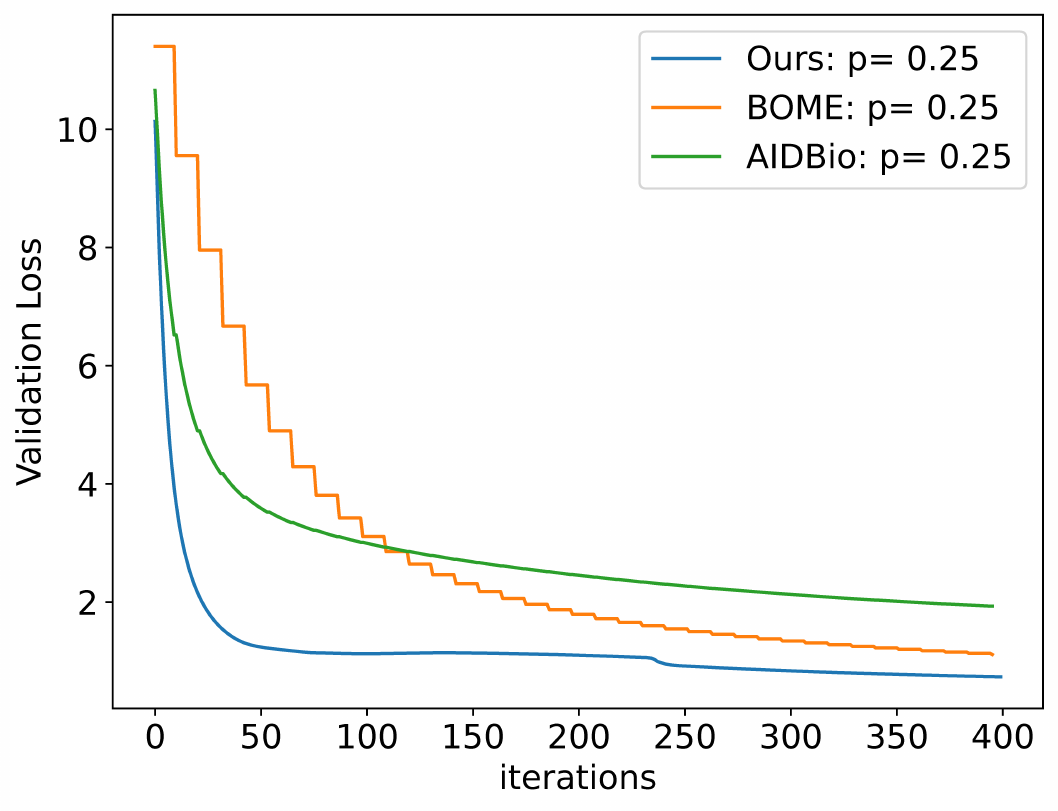}
        \includegraphics[width=0.24\linewidth]{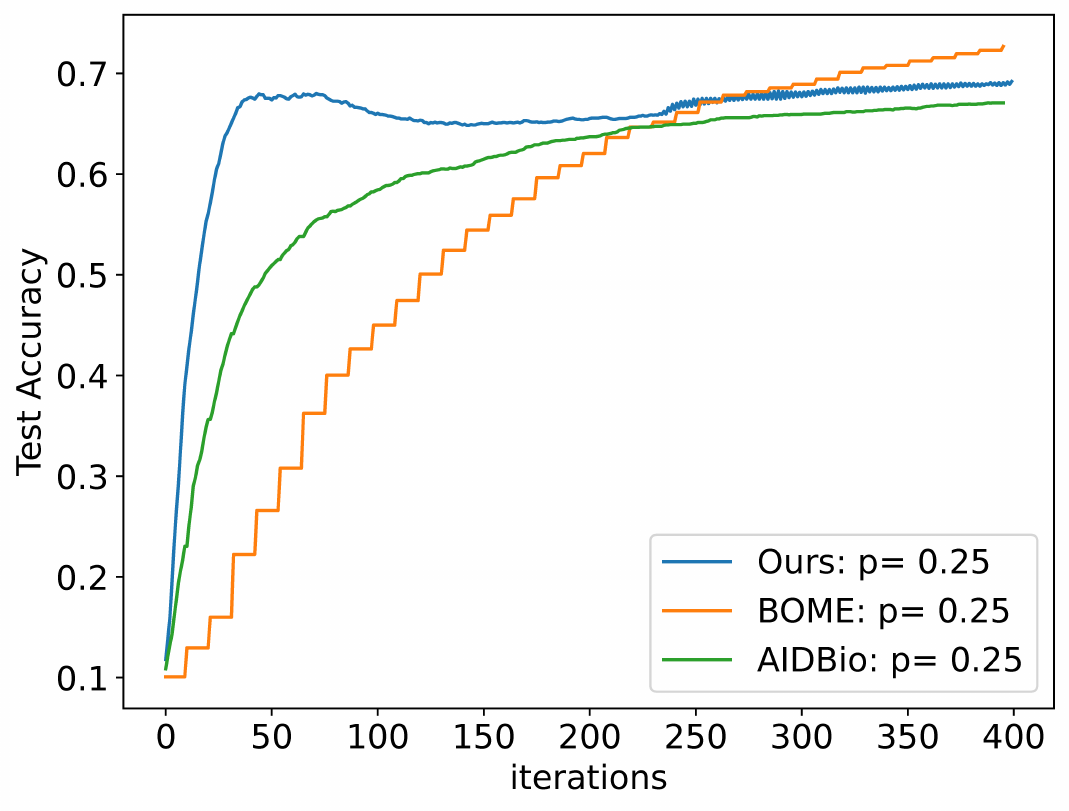}
        \includegraphics[width=0.24\linewidth]{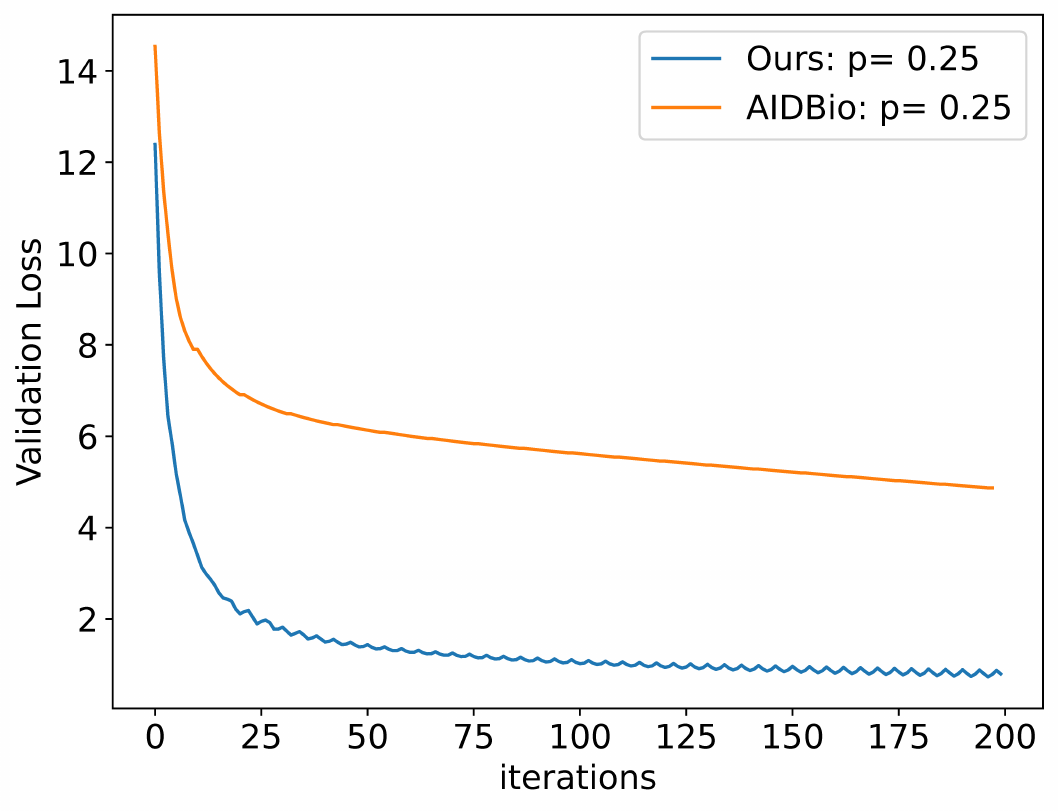}
        \includegraphics[width=0.24\linewidth]{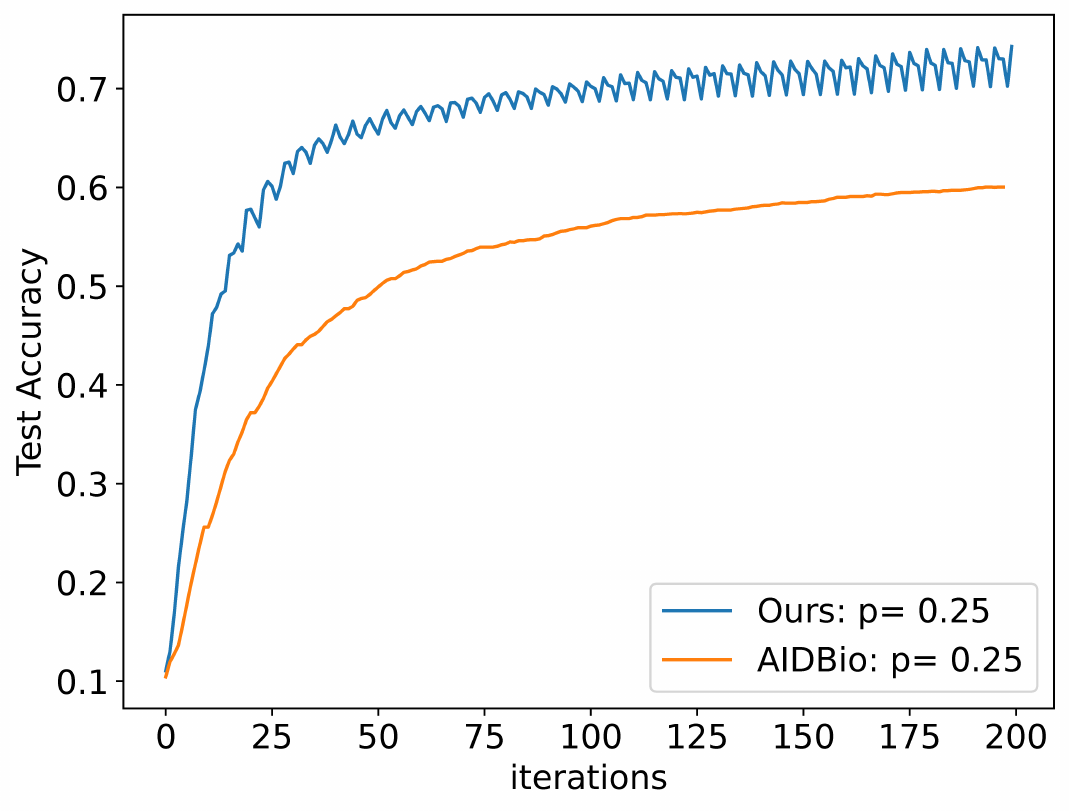}
        \caption{Comparison between our method with the state of the art on the DHC benchmark with corruption rate $p=25\%$.
        The first two plots show the validation loss and the accuracy of the test set on the DHC benchmark with PCA. 
        The last two plots show the validation loss and the accuracy of the test set on the large-scale DHC problem.}
        \label{fig:DHC}
     \end{figure*}
    
    \subsection{Data Hyper-Cleaning}
    Consider the DHC problem, where some of the labels in the training data have been corrupted, and the goal is to train a classifier utilizing the clean validation data. 
    The objective function is given by 
        \begin{align}
            &\min_x \frac{1}{N_{\text{val}}} \sum_{(a_{i}, b_{i}) \in \mathcal{D}_{\text{val}}} \mathcal{L}(a_{i}^\top y^\star(x), b_{i}) \notag \\
            &\text{s.t.} y^\star(x) \! = \! \argmin_y
            \frac{1}{N_{\text{tr}}} 
            \sum_{(a_{i}, b_{i}) \in \mathcal{D}_{\text{tr}}} \! \sigma(x_i) \mathcal{L}(a_{i}^\top y, b_{i}) \! + \! \lambda  \|y\|^2\!, \notag
        \end{align}
    where $\lambda=0.001$ is the regularizer and $\sigma(\cdot)$ and $\mathcal{L}(\cdot)$ represent the sigmoid function and cross-entropy loss, respectively.
    The upper-level variable $x \in \mathbb{R}^{5000}$ represents the sample weights, and lower-level variable $y$ is the weight of the classifier.
    We split the data into training $\mathcal{D}_{\text{tr}}$, validation $\mathcal{D}_{\text{val}}$, and test $\mathcal{D}_{\text{test}}$ and run the experiment under two setups, one reduces the dimensionality of the problem using Principle Component Analysis (PCA), and the other one tests our method in high-dimensional setting. 
    \subsubsection{Low-dimension DHC}
     We first use PCA to reduce the dimensions of the problem to $y \in \mathbb{R}^{82 \times 10}$ and run the experiment with corruption rate $p=25\%$.

     The first two plots in \Cref{fig:DHC} compare our method with BOME, and AIDBiO in terms of validation loss and test accuracy.

    \subsubsection{High-dimension DHC}
    In this experiment, we aim to study the performance of our method in high-dimensional benchmarks.
    Therefore, we do not use PCA, which translates to $y$ being a $784 \times 10$ matrix.

    The final two plots in \Cref{fig:DHC} compare our method with  AIDBiO in terms of validation loss and test accuracy

    \subsubsection{Neural Network Classifier}
  
    To evaluate our method on yet another large-scale nonconvex method, we use a fully connected neural network with one hidden layer and ReLU activation functions to solve the DHC problem. 

        \begin{align}
            &\min_x \frac{1}{N_{\text{val}}} \sum_{(a_{i}, b_{i}) \in \mathcal{D}_{\text{val}}} \mathcal{L}(f_{y^\star(x)} (a_{i}) , b_{i}) \notag \\
            &\text{s.t.} y^\star(x) \! = \! \argmin_y
            \frac{1}{N_{\text{tr}}} 
            \sum_{(a_{i}, b_{i}) \in \mathcal{D}_{\text{tr}}} \! \sigma(x_i) \mathcal{L}(f_{y} (a_{i}), b_{i}) \! + \! \lambda  \|y\|^2\!, \notag
        \end{align}
    where $f_\theta(.)$ denotes the neural network parameterized by $\theta$.
    
    \begin{figure}[t]
        \centering
        \includegraphics[width=0.39\linewidth]{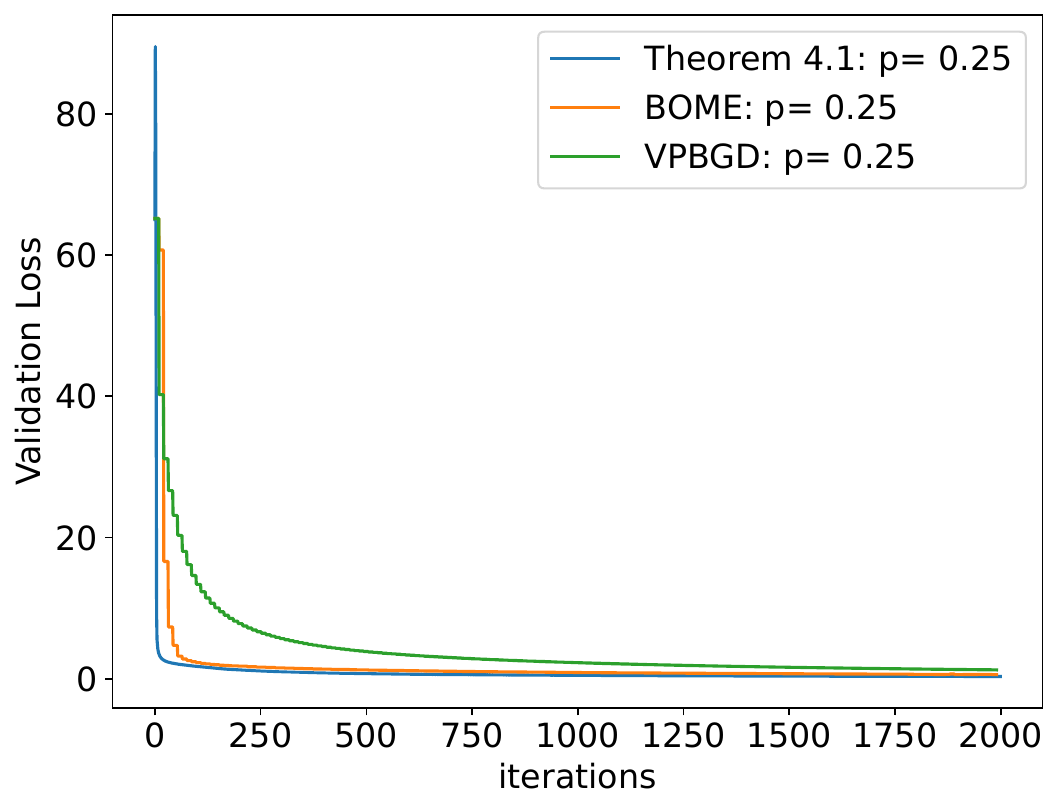}
        \includegraphics[width=0.39\linewidth]{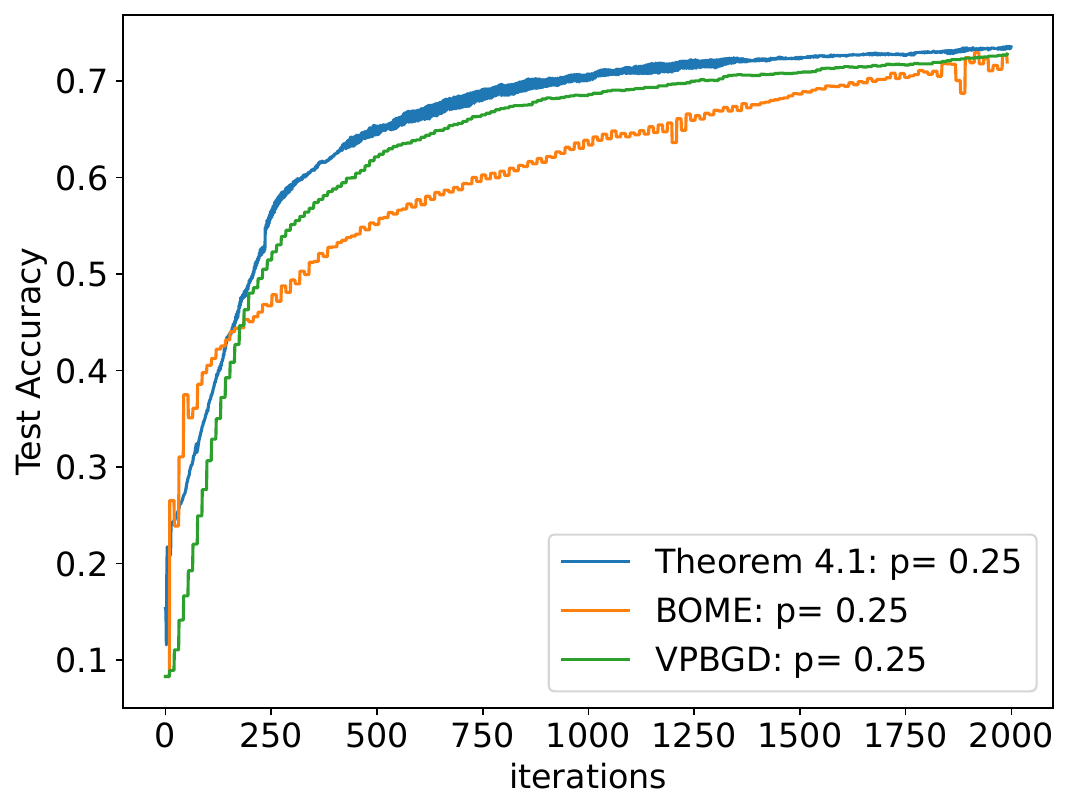}
        \caption{Comparisons of the validation loss and test accuracy between our method from \Cref{thm:conv_rate} with BOME and VPBGD on the DHC problem with neural network classifier.}
        \label{fig:NN}
    \end{figure}
    \Cref{fig:NN} shows that our method outperformed BOME and VPBGD in test accuracy. 
    The parameters for VPBGD were chosen from their git repository \footnote{ \href{https://github.com/hanshen95/penalized-bilevel-gradient-descent/}{https://github.com/hanshen95/penalized-bilevel-gradient-descent/}}.
    
    Furthermore, we did not compare with AIDBio due to the high computational burden of calculating the Hessians of neural networks.


\section{Conclusion}
In this paper, we proposed an inversion-free  {single-time scale} method to solve bilevel optimization problem of the form \eqref{eq: bilevel task 1}. 
Our idea hinged on the use of gradient descent to decrease the upper-level objective, coupled with a convex Quadratic Program (QP) that minimally perturbed the gradient descent directions to reduce the sub-optimality of the condition imposed by the lower-level problem. 
We proposed two methods, one assumed a certain regularity condition, and guaranteed to find a stationary point with an iteration complexity of $\mathcal{O}(1/\epsilon^{1.5})$, and the other one relaxed the assumption and in turn, proved a complexity of $\mathcal{O}(1/\epsilon^{3})$.
Furthermore, we ran extensive numerical analysis, showcasing the performance of our methods against state-of-the-art and under convex and non-convex settings.

\printbibliography

\clearpage
\appendix
\appendixpage

\section{Discussion on Related Works}\label{app:Related_works} 

\begin{remark}\label{rem:discussion}
    We make the following remarks regarding \Cref{tab:compare}:
    \begin{itemize}
    \item \textit{V-PBGD \cite{shen2023penalty}}: They establish conditions under which global or local minimizers of the penalized problem correspond to global or local minimizers of the original bilevel problem. However, the relation between the stationary points of the penalized problem with those of the original bilevel problem remains unsettled. In their setting,  a stationary point merely indicates that the gradient of the penalized objective vanishes, but this does not ensure that the penalty function $ p(x,y) :=g(x,y)-g(x)$ is zero. Consequently, such an stationary point does not satisfy the essential lower-level constraint $y\in\argmin g(x,\cdot)$ and therefore is not a valid solution to the original bilevel problem.
    \item \textit{GALET \cite{xiao2023generalized}}: In their work, the assumption \(\inf_{(x,y)}\{\sigma_{\min}^+(\nabla_{yy}^2 g(x,y))\} > 0\) is a strong global condition asserting that the Hessian of \(g\) w.r.t \(y\) is uniformly nondegenerate.  In other words, for every fixed \(x\), the function \(g(x,\cdot)\) exhibits a form of strong convexity which guarantees that the lower-level problem \(\min_y g(x,y)\) has a unique minimizer \(y^*(x)\) that depends smoothly on \(x\). Thus, this assumption along with
    PL condition, and Lipschitz continuity of Hessian  implies that the GALET assumes the strongly convex setting.  {For more details on this, see Appendix B of \cite{huang2024optimal}.}
    \item  \textit{HJFBiO \cite{huang2024optimal}}:
    For $\mu$-PL function g, they supposed that all singular values of $ \nabla_{yy}^2 g(x, y^*(x))$  lie in $[\mu, L_g]$  implying the Hessian is positive definite at the minimizer \( y^*(x) \), ensuring local strong convexity of \ $g(x, y)$  around $y^*(x)$. This is stronger than the PL inequality (which only ensures gradient growth) but weaker than strong convexity (which requires positive definiteness everywhere). The assumption guarantees that \( y^*(x) \) is an   {isolated} local minimizer, enables smooth dependence of \( y^*(x) \) on \( x \). 
        Moreover, as discussed in \cite{xiao2023generalized}, the algorithms in \cite{huang2024optimal} explicitly use Hessian oracles.
    \end{itemize}
\end{remark}

\section{Bilevel Optimization Application}
\textbf{Meta Learning:} 
Consider a few-shot meta-learning problem with $t$ tasks, $\{ \mathcal{T}_i \}_{i=1}^{t}$, sampled from a task distribution. Each task $\mathcal{T}_i$ is associated with a loss function $\ell(x, y_i; \xi)$ over data $\xi$, where $x \in \mathbb{R}^n$ represents the parameters of a shared embedding model across all tasks, and $y_i \in \mathbb{R}^m$ denotes the task-specific parameters. The objective is to determine an optimal $x$ that generalizes across all tasks while simultaneously learning the task-specific parameters $\{ y_i \}_{i=1}^{t}$ by minimizing their respective losses. In the lower-level problem, each task learner $\mathcal{T}_i$ optimizes $y_i$ by minimizing its loss over a training dataset $\mathcal{D}^{i}_{\text{tr}}$. In the upper-level problem, the meta-learner searches for a globally shared parameter $x$ that performs well across all tasks over a test dataset $\mathcal{D}_{\text{te}} = \{ \mathcal{D}^{1}_{\text{te}}, \dots, \mathcal{D}^{t}_{\text{te}} \}$, where the minimizers $\{ y_i \}_{i=1}^{t}$ from the lower-level problem are used \cite{huang2023momentum}. Let $\tilde{y} = (y_1, \dots, y_t)$ to denote the collection of all task-specific parameters, then the bilevel formulation can be represented as 
\begin{align*}
    \min_{x} ~ & \ell_{\mathcal{D}_{\text{te}}}(x, \tilde{y}^*) = \frac{1}{t} \sum_{i=1}^{t} \mathbb{E}_{\xi \sim \mathcal{D}^{i}_{\text{te}}} \left[ \ell_{\text{te}}(x, y_i^*; \xi) \right]
    \\
    \text{s.t.} ~ & y^* \in \arg\min_{\tilde{y}} \ell_{\mathcal{D}_{\text{tr}}}(x, \tilde{y}) = \frac{1}{t} \sum_{i=1}^{t} \mathbb{E}_{\xi \sim \mathcal{D}^{i}_{\text{tr}}} \left[ \ell_{\text{tr}}(x, y_i; \xi) \right],
\end{align*}
where, $\mathcal{D}^{i}_{\text{tr}}$ and $\mathcal{D}^{i}_{\text{te}}$ represent the training and test datasets for task $\mathcal{T}_i$, respectively. The lower-level problem optimizes each task-specific parameter $y_i$ by minimizing its corresponding loss $\ell_{\text{tr}}(x, y_i; \xi)$.

\section{Proofs}
\begin{lemma}\label{lem:lip_h}
    Suppose Assumption \ref{assumption:lowerlevel} hold. Then, function \( h: \mathbb{R}^n \times \mathbb{R}^m \to \mathbb{R} \) defined by $h(x, y) = \|\nabla_y g(x, y)\|^2$
    is Lipschitz continuous with constant \( L_h = 2 C_g (L_{yy}^g + L_{yx}^g ) \).
\end{lemma}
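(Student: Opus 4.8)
The plan is to exploit the elementary factorization $\norm{a}^2-\norm{b}^2=(\norm{a}-\norm{b})(\norm{a}+\norm{b})$ together with the uniform gradient bound $\norm{\nabla_y g(x,y)}\le C_g$ from Assumption \ref{assumption:lowerlevel}. Fix two points $(x_1,y_1)$ and $(x_2,y_2)$ and abbreviate $a=\nabla_y g(x_1,y_1)$, $b=\nabla_y g(x_2,y_2)$. Then
\[
\abs{h(x_1,y_1)-h(x_2,y_2)}=\abs{\norm{a}^2-\norm{b}^2}=\abs{\norm{a}-\norm{b}}\,(\norm{a}+\norm{b})\le \norm{a-b}\,(\norm{a}+\norm{b}),
\]
where the last inequality is the reverse triangle inequality $\abs{\norm{a}-\norm{b}}\le\norm{a-b}$. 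By the uniform bound, $\norm{a}+\norm{b}\le 2C_g$, so the problem reduces to estimating $\norm{a-b}$.

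Next I would bound $\norm{a-b}=\norm{\nabla_y g(x_1,y_1)-\nabla_y g(x_2,y_2)}$ by inserting the intermediate point $(x_1,y_2)$ and applying the triangle inequality:
\[
\norm{\nabla_y g(x_1,y_1)-\nabla_y g(x_2,y_2)}\le \norm{\nabla_y g(x_1,y_1)-\nabla_y g(x_1,y_2)}+\norm{\nabla_y g(x_1,y_2)-\nabla_y g(x_2,y_2)}.
\]
The first term is at most $L_{yy}^g\norm{y_1-y_2}$ by the $L_{yy}^g$-Lipschitz continuity of $\nabla_y g(x_1,\cdot)$, and the second is at most $L_{yx}^g\norm{x_1-x_2}$ by the $L_{yx}^g$-Lipschitz continuity of $\nabla_y g(\cdot,y_2)$. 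Since $\norm{x_1-x_2}\le\norm{(x_1,y_1)-(x_2,y_2)}$ and $\norm{y_1-y_2}\le\norm{(x_1,y_1)-(x_2,y_2)}$, this gives $\norm{a-b}\le (L_{yy}^g+L_{yx}^g)\norm{(x_1,y_1)-(x_2,y_2)}$. Combining with the previous display yields $\abs{h(x_1,y_1)-h(x_2,y_2)}\le 2C_g(L_{yy}^g+L_{yx}^g)\norm{(x_1,y_1)-(x_2,y_2)}$, i.e. the claim with $L_h=2C_g(L_{yy}^g+L_{yx}^g)$.

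I do not expect any real obstacle: the argument is a two-line application of difference-of-squares plus a triangle-inequality split into the two partial Lipschitz constants. The only point worth emphasizing is that it is the uniform bound $\norm{\nabla_y g}\le C_g$ that makes $h$ (as opposed to merely $\nabla h$) globally Lipschitz — without it, $h$ behaves like a squared quantity and would fail to be Lipschitz; this is exactly why Assumption \ref{assumption:lowerlevel} includes that bound.
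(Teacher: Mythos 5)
Your proof is correct and follows essentially the same route as the paper's: the difference-of-squares factorization with the reverse triangle inequality, the uniform bound $\|\nabla_y g\|\le C_g$, and the split through the intermediate point $(x_1,y_2)$ using the two partial Lipschitz constants. Your closing remark on why the gradient bound is what makes $h$ itself (not just $\nabla h$) globally Lipschitz is a nice observation, but the argument matches the paper's proof step for step.
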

\begin{proof}
    Consider any two points \( (x_1, y_1) \) and \( (x_2, y_2) \) in \( \mathbb{R}^n \times \mathbb{R}^m \). We have:
\[
h(x_1, y_1) - h(x_2, y_2) = \|\nabla_y g(x_1, y_1)\|^2 - \|\nabla_y g(x_2, y_2)\|^2.
\]
This can be rewritten using the identity \( a^2 - b^2 = (a + b)(a - b) \) followed by utilizing Assumption \ref{assumption:lowerlevel} along with the application of triangle inequality as 
\begin{align*}
    |h(x_1, y_1) - h(x_2, y_2)| &\leq 2 C_y^g \| \nabla_y g(x_1, y_1) - \nabla_y g(x_2, y_2)\| \\
    &\leq 2 C_y^g \| \nabla_y g(x_1, y_1) - \nabla_y g(x_1, y_2)+\nabla_y g(x_1, y_2)- \nabla_y g(x_2, y_2)\| \\
    & \leq 2 C_y^g \big( L_{yy}^g \| y_1 -y_2\| + L_{yx}^g \|x_1 - x_2\|\big) \\
    & \leq 2 C_y^g ( L_{yy}^g + L_{yx}^g) \| (x_1, y_1) - (x_2, y_2)\|
\end{align*}
which implies the result.
    \end{proof}
For the sake of simplicity throughout the remainder of the proofs, we define the vectors \( z_k = (x_k, y_k) \) and \( \Delta_k = (\Delta^x_k, \Delta^y_k) \), both belonging to \( \mathbb{R}^{n+m} \).
\subsection[Proof of Theorem~\ref{thm:conv_rate}]%
           {Proof of \cref{thm:conv_rate}}\label{proof:thm_conv_rate}

\begin{proof}
    Function $f$ is continuously differentiable with a Lipschitz continuous gradient, characterized by  \( L_f = \max\{L_x^f, L_y^f\} \). Using the smoothness of the function $f$, we have 
    \begin{align}\label{eq:desc}
    f(z_{k+1}) &\leq f(z_k) + \nabla f(z_k)^\top (z_{k+1} - z_k) +\frac{L_f}{2}\|z_{k+1}-z_k\|^2\nonumber \\
    &= f(z_k) +\gamma \nabla_z f(z_k)^\top \Delta_k + \frac{\gamma^2 L_f}{2} \|\Delta_k \|^2 \nonumber \\
    &=f(z_k) +\gamma (\nabla f(z_k)+\Delta_k)^\top \Delta_k + (\frac{\gamma^2 L_f}{2}-\gamma) \|\Delta_k \|^2 \nonumber \\
    &=f(z_k) -\gamma \lambda(z_k) \nabla h(z_k)^\top \Delta_k + (\frac{\gamma^2 L_f}{2}-\gamma) \|\Delta_k \|^2 .
\end{align}
Since $(\Delta_k,\lambda(z_k))$ is the optimal primal-dual pair for the subproblem in \eqref{eq: first-order lower-level discrete time2} at iteration $k$, the complementarity slackness condition implies that $\lambda(z_k)(\nabla h(z_k)^\top \Delta_k+\alpha \|\nabla h(z_k)\|^2)=0$. Using this result within the above inequality we obtain
\begin{align}\label{eq:upper-bound-f}
    f(z_{k+1})-f(z_k) 
    \leq (\frac{\gamma^2 L_f}{2}-\gamma) \|\Delta_k \|^2 +\gamma \alpha \lambda(z_k)\|\nabla h(z_k)\|^2.
\end{align}

Similarly, using the smoothness of function $h$ and the update of $\Delta_k$, we have 
\begin{align}\label{eq:smoothness-h}
    h(z_{k+1})-h(z_k)\nonumber 
    &\leq \gamma\nabla h(z_k)^\top\Delta_k + \frac{\gamma^2 L_h}{2} \|\Delta_k \|^2  \nonumber\\
    &=-\gamma \nabla h(z_k)^\top \nabla f(z_k)-\gamma\lambda(z_k)\|\nabla h(z_k)\|^2 +  \frac{\gamma^2 L_h}{2} \|\Delta_k \|^2\nonumber\\
    &\leq \frac{\gamma}{2\alpha L_h}\|\nabla h(z_k)\|^2+\frac{\alpha \gamma L_h}{2}C_f^2-\gamma\lambda(z_k)\|\nabla h(z_k)\|^2 +  \frac{\gamma^2 L_h}{2} \|\Delta_k \|^2,
\end{align}
where in the last inequality we used Young's inequality 
$$-\nabla f(z_k)^T \nabla h(z_k) \leq  \frac{\alpha L_h}{2} \|\nabla f(z_k)\|_2^2 + \frac{1}{2\alpha L_h} \|\nabla h(z_k)\|_2^2 $$ 
as well as boundedness of $\nabla f$. Let us define $v(z)\triangleq f(z)+\alpha h(z)$.
Combining the above inequalities by multiplying \eqref{eq:smoothness-h} with $\alpha$ and adding to \eqref{eq:upper-bound-f} we obtain  
\begin{align}
    v(z_{k+1})-v(z_k)\nonumber 
    & \leq  \frac{\gamma}{2 L_h}\|\nabla h(z_k)\|^2+\frac{\alpha^2 \gamma L_h}{2}C_f^2+ \gamma(\frac{ L_f+\alpha L_h}{2}\gamma -1) \|\Delta_k \|^2.
\end{align}
Next, assuming that $\gamma\leq \frac{1}{L_f+\alpha L_h}$ and rearranging the terms we conclude that 
\begin{align}\label{eq:bound-Deltak}
    \frac{1}{2}\|\Delta_k \|^2\leq \frac{v(z_k)-v(z_{k+1})}{\gamma}+  \frac{1}{2 L_h}\|\nabla h(z_k)\|^2+\frac{\alpha^2  L_h}{2}C_f^2.
\end{align}
On the other hand, using the smoothness of $h$ once again along with the fact that $\Delta_k$ is a feasible point of \eqref{eq: first-order lower-level discrete time2} we have 

\begin{align*}
    h(z_{k+1})-h(z_k)&\leq \gamma \nabla h(z_k)^\top \Delta_k +\frac{\gamma^2 L_h}{2}\|\Delta_k\|^2\nonumber\\
    &\leq -\gamma \alpha(\|\nabla h(z_k)\|^2)+\frac{\gamma^2 L_h}{2}\|\Delta_k\|^2.
\end{align*}
Rearranging the terms leads to
\begin{align}\label{eq:bound-norm-h}
    \frac{1}{2L_h}\|\nabla h(z_k)\|^2&\leq \frac{h(z_k)-h(z_{k+1})}{2\alpha \gamma L_h}+\frac{\gamma}{4\alpha }\|\Delta_k\|^2.
\end{align}
Now adding up \eqref{eq:bound-Deltak} and \eqref{eq:bound-norm-h} and using $\gamma\leq \alpha$ leads to 
\begin{align*}
    \frac{1}{4}\|\Delta_k \|^2 &\leq \frac{v(z_k)-v(z_{k+1})}{\gamma}+\frac{h(z_k)-h(z_{k+1})}{2\alpha \gamma L_h} +  \frac{\alpha^2  L_h}{2}C_f^2
\end{align*}
Summing the above inequality over $k=0$ to $K-1$ and divide both sides $K/4$ leads to
\begin{align}\label{eq:rate-Deltak}
    \frac{1}{K}\sum_{k=0}^{K-1}\|\Delta_k \|^2&\leq \frac{4(v(z_0)-v(z_{K}))}{\gamma K}+\frac{2(h(z_0)-h(z_{K}))}{\alpha \gamma L_h K}  +  2{\alpha^2  L_h}C_f^2\nonumber\\
    & \leq \frac{4(f(z_0)+\alpha^3 C_0-\bar{f})}{\gamma K}+\frac{2\alpha C_0}{\gamma L_h K} +  2{\alpha^2  L_h}C_f^2.
\end{align}
where in the last inequality we used nonegativity of function $h$, the lower-bound on function $f$, and the initialization condition $h(z_0)\leq \alpha^2 C_0$. 

Furthermore, summing the result in \eqref{eq:bound-norm-h} over $k=0$ to $K-1$ and dividing both sides by $K/(2L_h)$ and using \eqref{eq:rate-Deltak} implies that
\begin{align}\label{eq:rate-norm-h}
    \frac{1}{K}\sum_{k=0}^{K-1}\|\nabla h(z_k)\|^2&\leq \frac{h(z_0)-h(z_K)}{\alpha \gamma K}+\frac{L_h\gamma}{2\alpha K}\sum_{k=0}^{K-1}\|\Delta_k\|^2\nonumber\\
    &\leq \frac{2\alpha C_0}{\gamma K}+\frac{2L_h(f(z_0)+\alpha^3 C_0-\bar{f})}{\gamma K} +  {\alpha^2  L_h^2}C_f^2. 
\end{align}
\end{proof}

\subsection{Proof of Corollary~\ref{cor:rate-nabla-h}:}\label{proof:corollary_rate_nabla_h}
\begin{proof}
    Suppose $\rho(x,y)=\|\nabla h(x,y)\|^2$, using the result of \cref{thm:conv_rate}, we have the following convergence bounds
    \begin{align*}
    \frac{1}{K}\sum_{k=0}^{K-1} \left( \|\Delta_k^x \|^2 +\|\Delta_k^y \|^2 \right)
    &\leq \frac{4(f_0+C_0-\bar{f})}{\gamma K} + \frac{2C_0}{\gamma L_h K} + 2{\alpha^2  L_h}C_f^2,
\end{align*}
and, 
\begin{align*}
    \frac{1}{K}\sum_{k=0}^{K-1} \|\nabla h(x_k, y_k)\|^2  
    &\leq \frac{2C_0}{\gamma K} + {\alpha^2  L_h^2}C_f^2 + \frac{2L_h(f_0+C_0-\bar{f})}{\gamma K}.
\end{align*}

Setting $\alpha = K^{-1/3}$ and $\gamma = \min\{\alpha, \frac{1}{L_f + \alpha L_h}\}$, implies that $\gamma = \Omega( 1/K^{1/3})$. Substituting $\alpha$ and $\gamma$ we obtain $\frac{1}{K} \sum_{k=0}^{K-1} ( \|\Delta_k^x \|^2 +\|\Delta_k^y \|^2 ) = \mathcal{O}(K^{-2/3}),$ and $\frac{1}{K} \sum_{k=0}^{K-1} \|\nabla h(x_k, y_k)\|^2 = \mathcal{O}(K^{-2/3})$. Now let $t\triangleq \argmin_{0\leq k\leq K-1}\{  {\max}\{\|\Delta_k\|^2,\|\nabla h(x_k,y_k)\|^2\}\}$, then we conclude that
$ (\|\Delta_t^x \|^2 +\|\Delta_t^y \|^2 ) \leq \epsilon$ and $\|\nabla h(x_t, y_t)\|^2 \leq \epsilon$ after $K = \mathcal{O}(1/\epsilon^{1.5})$ iterations.
\end{proof}

\subsection{Proof of \cref{cor:rate_nabla_g}:}\label{proof:corollary_rate_nabla_g}
\begin{proof}
    Corollary \ref{cor:rate-nabla-h} ensures that 
    $\frac{1}{K} \sum_{k=0}^{K-1} ( \|\Delta_k^x \|^2 +\|\Delta_k^y \|^2 ) = \mathcal{O}(K^{-2/3}),$ and $\frac{1}{K} \sum_{k=0}^{K-1} \|\nabla h(x_k, y_k)\|^2 = \mathcal{O}(K^{-2/3})$.
    Utilizing the regularity condition, it follows that \(\|\nabla_y g(x_k, y_k)\|^2 \leq c^2\,\|\nabla h(x_k, y_k)\|^2\), hence, $\frac{1}{K} \sum_{k=0}^{K-1} \|\nabla_y g(x_k, y_k)\|^2 = \mathcal{O}(K^{-2/3})$.
    Now let $t\triangleq \argmin_{0\leq k\leq K-1}\{  {\max}\{\|\Delta_k\|^2,\|\nabla_y g(x_k,y_k)\|^2\}\}$, then we conclude that
    \(\|\nabla_y g(x_t, y_t)\|^2 \leq \epsilon\) and \(\|\nabla f(x_t, y_t) + \lambda_t \nabla h(x_t, y_t)\|^2 \leq \epsilon\) within $K=\mathcal O(1/\epsilon^{1.5})$ iterations, thereby guaranteeing that \((x_t, y_t, \lambda_t)\) is an \(\epsilon\)-KKT point of problem \eqref{eq: bilevel task 2 a}.
\end{proof}
\subsection{Proof of \cref{thm:conv_rate2}:}\label{proof:thm_conv_rate_2}
Using the smoothness of $h$ once again along with the fact that $\Delta_k$ is a feasible point of \eqref{eq: first-order lower-level discrete time2} we have 
\begin{align}\label{eq:h-smooth}
    h(z_{k+1})-h(z_k)&\leq \gamma \nabla h(z_k)^\top \Delta_k +\frac{\gamma^2 L_h}{2}\|\Delta_k\|^2\nonumber\\
    &\leq -\gamma \alpha(\|\nabla h(z_k)\| {(h(z_0))^{1/2})}+\frac{\gamma^2 L_h}{2}\|\Delta_k\|^2.
\end{align}
Note that $\Delta_k$ can bounded as follows: 
\begin{equation}
    \|\Delta_k\|\leq \|\nabla f(z_k)\|+\lambda(z_k)\|\nabla h(z_k)\|\leq B_\Delta\triangleq 2C_f+\alpha^2  {\sqrt{C_0}}.
\end{equation}
Then, from \eqref{eq:h-smooth} and using the above bound followed by a telescopic summation we have that for any $k\geq 0$
\begin{align*}
    h(z_k)\leq h(z_0)+\frac{\gamma^2L_h}{2}B_\Delta^2 k.
\end{align*}
Taking the average from the above inequality over $k=0$ to $K-1$ and using the initialization condition we obtain
\begin{align}\label{eq:rate-hk}
    \frac{1}{K}\sum_{k=0}^{K-1} h(z_k)\leq \alpha^2 C_0+\frac{\gamma^2L_h B_\Delta^2}{2}\frac{(K-1)}{2}.
\end{align}
Moreover, similar to the proof of the previous result, we can show that
\begin{align}\label{eq:upper-bound-f2}
    f(z_{k+1})-f(z_k)&\leq (\frac{\gamma^2 L_f}{2}-\gamma) \|\Delta_k \|^2 +\gamma \alpha \lambda(z_k)\|\nabla h(z_k)\| {(h(z_0))^{1/2}}\nonumber\\
    &\leq (\frac{\gamma^2 L_f}{2}-\gamma) \|\Delta_k \|^2 +\gamma \alpha B_\Delta  {(h(z_0))^{1/2}}\nonumber\\
    &\leq (\frac{\gamma^2 L_f}{2}-\gamma) \|\Delta_k \|^2 +\frac{\gamma \alpha^2}{2} B_\Delta^2 +\frac{\gamma}{2}  {h(z_0)}.
\end{align}
Now, selecting $\gamma\leq 1/L_f$, rearranging the terms and taking average over $k=0$ to $K-1$ we obtain
\begin{align}
    \frac{1}{K}\sum_{k=0}^{K-1}\|\Delta_k\|^2&\leq \frac{2(f_0-f_K)}{\gamma K}+\alpha^2 B_\Delta^2 + \frac{1}{K}\sum_{k=0}^{K-1} {h(z_0)}\nonumber\\
    &\leq \frac{2(f_0-f_K)}{\gamma K}+\alpha^2 B_\Delta^2 + \alpha^2 C_0 
\end{align}

\subsection{Proof of \cref{cor:rate_nabla_g2}:}\label{proof:corollary_rate_nabla_g2}
\begin{proof}
    Suppose $\rho(x,y) = \| \nabla h(x,y)\|\sqrt{h(x_0,y_0)}$, using the result of \cref{thm:conv_rate2}, we have the following convergence bound
     \[
        \frac{1}{K} \sum_{k=0}^{K-1} \|\Delta_k\|^2 \leq \frac{2(f_0 - f_K)}{\gamma K} + \alpha^2 B_\Delta^2 + \alpha^2 C_0 
    \]
    and,
    \[
        \frac{1}{K} \sum_{k=0}^{K-1} h(x_k, y_k) \leq \alpha^2 C_0 + \frac{\gamma^2 L_h B_\Delta^2 (K-1)}{4}.
    \]
  Setting $\alpha = K^{-1/6}$ and $\gamma =  \min\{\frac{1}{K^{2/3}}, \frac{1}{L_f}\}$ implies that $\gamma = \Omega(1/{K^{2/3}})$.  Substituting $\alpha$ and $\gamma$ into these inequalities simplifies them to $ \frac{1}{K} \sum_{k=0}^{K-1} \|\Delta_k\|^2 = \mathcal{O}({K^{-1/3}})$ and $\frac{1}{K} \sum_{k=0}^{K-1} h(x_k, y_k) = \mathcal{O}({K^{-1/3}})$. Now let 
  $t\triangleq \argmin_{0\leq k\leq K-1}\{\max\{\|\Delta_k\|^2,h(x_k,y_k)\}\}$, then we conclude that $\|\Delta_t\|^2 \leq \mathcal{O}({K^{-1/3}})$ and $h(x_t, y_t) \leq \mathcal{O}({K^{-1/3}})$. Utilizing the definition of $h(x,y) =\|\nabla_y g(x,y)\|^2$, we directly obtain $\|\nabla_y g(x_t, y_t)\|^2 \leq \mathcal{O}(1/K^{1/3})$. To achieve $\|\nabla_y g(x_t, y_t)\|^2 \leq \epsilon$ and $\|\nabla f(x_t, y_t) + \lambda_t \nabla h(x_t, y_t)\|^2 \leq \epsilon$, we require that $1/K^{1/3} \leq \epsilon$, which leads to choosing $K = \mathcal{O}(1/\epsilon^{3})$. Consequently, our proposed algorithm can achieve an $\epsilon$-KKT point $(x_t, y_t, \lambda_t)$ within $\mathcal{O}(1/\epsilon^{3})$ iterations.

\end{proof}

\section{Additional Experiments}\label{appendix:exp}
    \subsection{Coreset Selection}
    Following \cite{liu2022bome}, we consider the following coreset selection problem, which is a bi-level optimization problem with a strongly convex lower-level function, 
    \begin{align}
        &\min_x \quad \|y^\star(x) - y_0\|_2^2 \notag \\
        &\text{s.t.} \quad y^\star (x) \in \argmin_y \|y - A \sigma(x)\|_2^2 \notag
    \end{align}
    where $\sigma(x) = \exp(x) / \sum_{i=1}^4 \exp(x_i)$ is the softmax function, $x \in \mathbb{R}^4$,$y \in \mathbb{R}^2$ and $A \in \mathbb{R}^{2 \times 4}$. We compare our method against BOME \cite{liu2022bome} and AIDBiO \cite{ji2021bilevel}.
     {
    The result can be viewed in \Cref{fig:Toy example Coreset}, which shows that our method converges much faster than BOME and AIDBiO.
    }

    \begin{figure}
        \centering
        \includegraphics[width=0.33\linewidth]{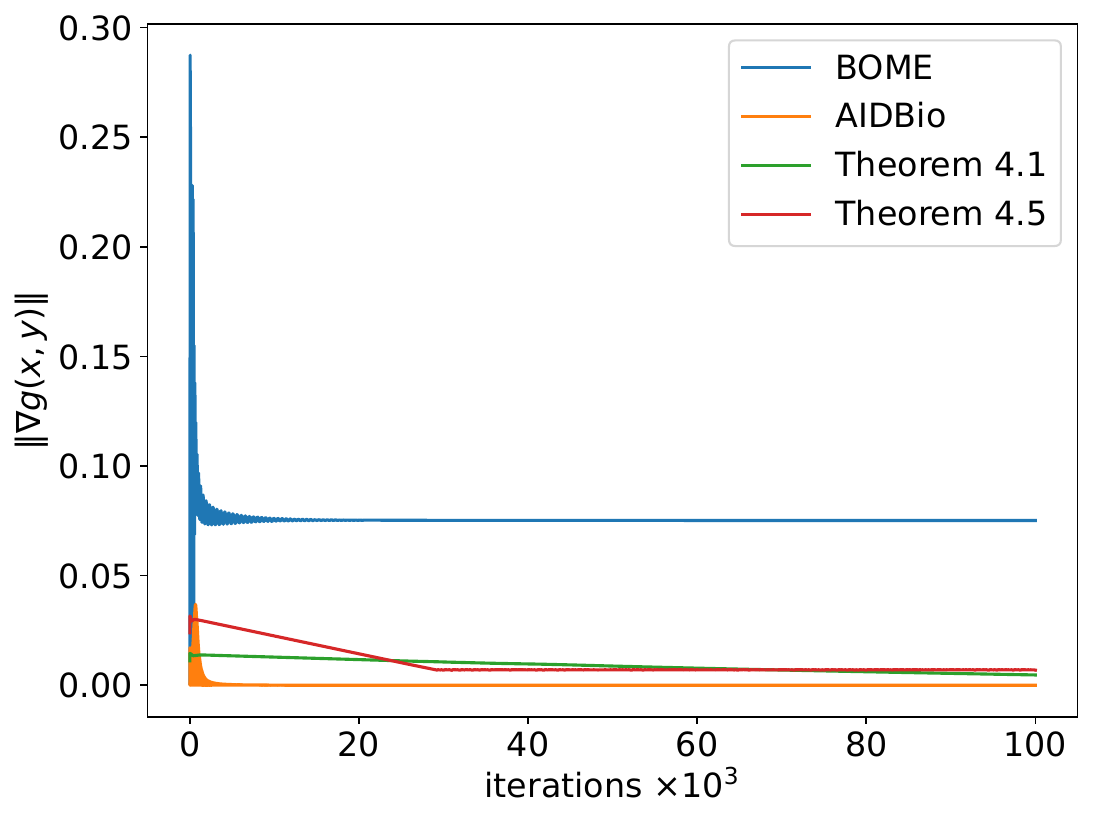}
        \includegraphics[width=0.33\linewidth]{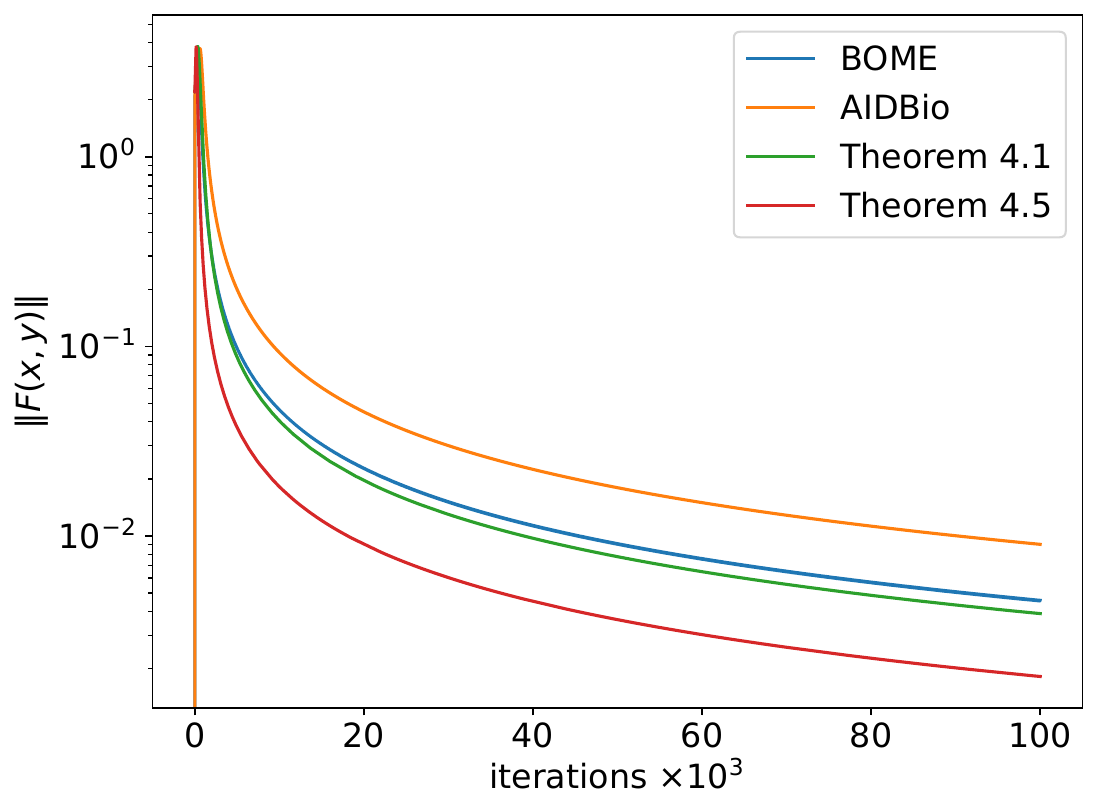}
        \caption{Comparison of our method, AIDBiO, and BOME on the coreset selection problem.}
        \label{fig:Toy example Coreset}
    \end{figure}

\end{document}